\theoremstyle{plain}
\newtheorem{theorem}{Theorem}[section]
\newtheorem{prop}[theorem]{Proposition}
\newtheorem{coro}[theorem]{Corollary}
\theoremstyle{definition}
\newtheorem{example}[theorem]{Example}
\newtheorem{remark}[theorem]{Remark}
\newcommand{\dd}{\,\mathrm{d}}
\newcommand{\pts}{\hspace{0.5pt}}
\newcommand{\nts}{\hspace{-0.5pt}}
\newcommand{\Leb}{\lambda^{}_{\mathrm{L}}}
\newcommand{\ZZ}{\mathbb{Z}\pts}
\newcommand{\QQ}{\mathbb{Q}}
\newcommand{\RR}{\mathbb{R}}
\newcommand{\NN}{\mathbb{N}}
\newcommand{\CC}{\mathbb{C}}
\newcommand{\TT}{\mathbb{T}}
\newcommand{\XX}{\mathbb{X}}
\newcommand{\YY}{\mathbb{Y}}
\newcommand{\cA}{\mathcal{A}}
\newcommand{\cO}{\mathcal{O}}
\newcommand{\cL}{\mathcal{L}}
\newcommand{\vL}{\varLambda}
\newcommand{\exend}{\hfill $\Diamond$}
\newcommand{\defeq}{\mathrel{\mathop:}=}
\DeclareMathOperator{\sinc}{sinc}
\DeclareMathOperator{\dens}{dens}
\DeclareMathOperator{\card}{card}
\DeclareMathOperator{\Mat}{Mat}
\DeclareMathOperator{\dotcup}{\dot\cup}
\newcommand{\oplam}{\mbox{\Large $\curlywedge$}}
\newcommand{\myfrac}[2]{\frac{\raisebox{-2pt}{$#1$}}
      {\raisebox{0.5pt}{$#2$}}}
\begin{document}

\title[Scaling of diffraction intensities 
near the origin]{Scaling of diffraction intensities 
near the origin:\\[2mm] Some rigorous results}

% \date{\today}

\author{Michael Baake}
\address{Fakult\"at f\"ur Mathematik, Universit\"at Bielefeld, \newline
\hspace*{\parindent}Postfach 100131, 33501 Bielefeld, Germany}
\email{mbaake@math.uni-bielefeld.de }

\author{Uwe Grimm}
\address{School of Mathematics and Statistics,
  The Open University,\newline \hspace*{\parindent}Walton Hall, 
  Milton Keynes MK7 6AA, United Kingdom} 
\email{uwe.grimm@open.ac.uk}

\begin{abstract}  
  The scaling behaviour of the diffraction intensity near the origin
  is investigated for (partially) ordered systems, with an emphasis on
  illustrative, rigorous results. This is an established method to
  detect and quantify the fluctuation behaviour known under the term
  hyperuniformity. Here, we consider one-dimensional systems with pure
  point, singular continuous and absolutely continuous diffraction
  spectra, which include perfectly ordered cut and project and
  inflation point sets as well as systems with stochastic disorder.
\end{abstract}

\maketitle

\section{Introduction}

While the concept of \emph{order} is intuitive, it is surprisingly
challenging to `measure' the order in a system in a way that leads to
a meaningful classification of different manifestations of order in
Nature. Some of the most prominent measures available are based on
ideas from statistical physics and crystallography, such as
\emph{entropy} which is closely related to disorder, or
\emph{diffraction} which is the main tool for determining the
structure of solids. Essentially, diffraction is the Fourier transform
of the pair correlation function, and hence quantifies the order in
the two-point correlations of the structure.

Following the discovery of quasicrystals in the early 1980s, a proper
mathematical treatment of the diffraction of aperiodically ordered
structures was required; see \cite{TAO,Hof} and references therein for
background and details. It turns out that there is a close connection
between diffraction and what is known as the \emph{dynamical spectrum}
in mathematics \cite{BL}. The latter is the spectrum of the
corresponding Koopman operator and is an important concept in ergodic
theory.  Aperiodically ordered structures, in particular those
constructed by inflation rules, provide interesting examples of
systems that exhibit a scaling (self-similarity) type of order, which
differs substantially from the translational order found in periodic
structures, such as conventional crystals.

Starting from the idea to use the degree of `(hyper)uniformity' in
density fluctuations in many-particle systems \cite{TS} to
characterise their order, the scaling behaviour of the diffraction
near the origin has emerged as a measure that captures the variance of
the long-distance correlations. Recently, a number of conjectures on
the scaling behaviour of the diffraction of aperiodically ordered
structures were made \cite{Josh1,Josh2}, reformulating and extending
earlier, partly heuristic, results by Luck \cite{Luck} from this
perspective; see also \cite{Aubry,GL}.

The purpose of this article is to link the recent interest in these
questions with some of the known techniques and results from rigorous
diffraction theory, as started by Hof in \cite{Hof} and later
developed by many people; see \cite{TAO} and references therein for a
systematic account. Our approach will make substantial use of the
exact renormalisation relations for primitive inflation rules
\cite{BFGR,BG15,BGM,NM,BaGriMa}, which will allow us to establish the
scaling behaviour rigorously. This is in some contrast to classic
methods of finite size scaling \cite{VR88,VR03,VR05,VR10}, where such
a behaviour is extrapolated and only asymptotically true.\medskip

The paper is organised as follows. We begin by recalling some
background material on diffraction, the projection formalism,
inflation rules and Lyapunov exponents in
Section~\ref{sec:prelim}. Then, in Section~\ref{sec:pp}, we discuss
systems with pure point spectrum, starting from the paradigmatic
Fibonacci chain, which we treat in two different ways. We also discuss
various generalisations, including noble means inflations and a
limit-periodic system. After a brief look into substitutions with more
than two letters, we close with an instructive example of
number-theoretic origin, which can also be understood as a projection
set.

In Section~\ref{sec:ac}, we summarise the situation for systems with
absolutely continuous spectrum via a comparative exposition of random
and deterministic cases. In particular, we discuss the Poisson process
in comparison to the Rudin--Shapiro sequence, the classical random
matrix ensembles, as well as the Markov lattice gas and binary random
tilings. Finally, Section~\ref{sec:sc} deals with systems with
singular continuous spectrum, of which the classic
\mbox{Thue{\pts}}--Morse sequence is a paradigm with a decay behaviour
faster than any power law. We then embed this into the family of
generalised \mbox{Thue{\pts}}--Morse sequences.

\section{Preliminaries and general methods}\label{sec:prelim}

Our approach to the scaling behaviour of the diffraction measure near
the origin requires a number of different methods. In this section, we
summarise key results and provide references for background and
further details. 

\subsection{Diffraction and scaling}

Throughout this article, we use the notation and results from
\cite{TAO} on diffraction theory, as well as some more advanced
results on the Fourier transform of measures from \cite{MS,TAO2}.
Various definitions are discussed there, and we use standard results
from these sources without further reference.  Note that the term
`measure' in this context refers to general (complex) Radon measures
in the mathematical sense. They can be identified with the continuous
linear functionals on the space of compactly supported continuous
functions on $\RR^d$.  In particular, given a (usually translation
bounded) measure $\omega$, we assume that an averaging sequence $\cA$
of van Hove type is specified, and the autocorrelation $\gamma =
\gamma^{}_{\omega} = \omega \circledast \widetilde{\omega}$ is given
as the Eberlein (or volume-averaged) convolution along $\cA$.  The
measure $\gamma$ is positive definite, and hence Fourier transformable
as a measure.

The Fourier transform of $\gamma$ is the \emph{diffraction measure}
$\widehat{\gamma}$, which is a positive measure. This is the
measure-theoretic formulation of the \emph{structure factor} from
physics and crystallography, which is better suited for rigorous
results. In particular, this approach defines the different spectral
components by means of the Lebesgue decomposition
\[
  \widehat{\gamma} \, = \,
  \widehat{\gamma}^{}_{\mathsf{pp}} +
  \widehat{\gamma}^{}_{\mathsf{sc}} +
  \widehat{\gamma}^{}_{\mathsf{ac}}
\]
of $\widehat{\gamma}$ into its pure point, singular continuous and
absolutely continuous parts; see \cite[Sec.~8.5.2]{TAO} for details.
In what follows, we only consider the one-dimensional case.

For the investigation of scaling properties, we follow the
existing literature and define
\begin{equation}\label{eq:Z-def}
     Z (k) \, \defeq \, \widehat{\gamma} \bigl( (0,k]\bigr) ,
\end{equation}
which is a modified version of the distribution function of the
diffraction measure. Due to the reflection symmetry of
$\widehat{\gamma}$ with respect to the origin, this quantity can also
be expressed as
\[
    Z (k) \, = \, \myfrac{1}{2}
    \Bigl(\widehat{\gamma} \bigl( [-k,k]\bigr) -
    \widehat{\gamma} \bigl( \{0\}\bigr) \Bigr).
\]
Sometimes, it is natural to replace $Z(k)$ by $Z(k) / I(0)$, where
$I(0) = \widehat{\gamma} \bigl( \{ 0 \} \bigr)$ is the intensity of
the central diffraction (or Bragg) peak.  Note that this just amounts
to a different normalisation. It will always be clear from the context
when we do so. This normalisation has no influence on the scaling
behaviour of $Z (k)$ as
$k\,\raisebox{2pt}{$\scriptscriptstyle \searrow$}\, 0$.

The interest in the scaling near the origin is based on the intuition
that the small-$k$ behaviour of the diffraction measure probes the
long-wavelength fluctuations in the structure, which is related to the
variance in the distribution of patches, and hence can serve as an
indicator for the degree of uniformity of the structure \cite{TS}.
Clearly, any periodic structure leads to $Z(k) = 0$ for all
sufficiently small $k$, so that the main interest is focused on
non-periodic systems, both ordered and disordered.

\subsection{Projection formalism}

One way to produce well-ordered aperiodic systems is based on the
(partial) projection of higher-dimensional (periodic) lattices.  Such
\emph{cut and project sets} or \emph{model sets} can be viewed as a
generalised variant of the notion of a quasiperiodic function. We can
only present a brief summary here, for details we refer to
\cite[Ch.~7]{TAO}.  The general setting for a model set in
\emph{physical} (direct) space $\RR^{d}$ is encoded in the \emph{cut
  and project scheme} (CPS) $(\RR^{d},H,\cL)$,
\begin{equation}\label{eq:cps}
\renewcommand{\arraystretch}{1.2}\begin{array}{r@{}ccccc@{}l}
   & \RR^{d} & \xleftarrow{\,\;\;\pi\;\;\,} 
         & \RR^{d} \, \times \, H  & 
        \xrightarrow{\;\pi^{}_{\mathrm{int}\;}} & H & \\
   & \cup & & \cup & & \cup & \hspace*{-2ex} 
   \raisebox{1pt}{\text{\footnotesize dense}} \\
   & \pi(\cL) & \xleftarrow{\; 1-1 \;} & \cL & 
   \xrightarrow{\; \hphantom{1-1} \;} & 
       \pi^{}_{\mathrm{int}}(\cL) & \\
   & \| & & & & \| & \\
   &  L & \multicolumn{3}{c}{\xrightarrow{\qquad\quad\quad \;\;
       \;\star\; \;\; \quad\quad\qquad}} 
       &   {L_{}}^{\star} & \\
\end{array}\renewcommand{\arraystretch}{1}
\end{equation}
where the \emph{internal} space $H$ is a locally compact Abelian group
(in many examples, $H$ will turn out to be another Euclidean space, so
$H=\RR^{m}$), $\cL$ is a lattice (co-compact discrete subgroup) in
$\RR^{d}\times H$, and where $\pi$ and $\pi^{}_{\mathrm{int}}$ denote
the natural projections onto the physical and internal spaces.  The
assumption that $L=\pi(\cL)\subset\RR^{d}$ is a bijective image of
$\cL$ in physical space guarantees that the $\star\,$-map $x\mapsto
x^{\star}$ is well defined on $L$.

A \emph{model set} for a given CPS is then a set of the form
\begin{equation}\label{eq:ms}
    \vL \, = \, \oplam(W) \, = \, 
    \bigl\{  x\in L :  x^{\star} \in W \bigr\} ,
\end{equation}
where the domain $W\subseteq H$ (called the \emph{window} or
\emph{acceptance domain}) is a relatively compact set with non-empty
interior. These conditions on the window guarantee that the model set
$\vL\subseteq L$ is both uniformly discrete and relatively
dense, so a Delone set in $\RR^{d}$. If the window is
sufficiently `nice' (for instance, when $W$ is compact with boundary
of measure $0$), the diffraction measure $\widehat{\gamma}$ of the
associated Dirac comb
\[
   \delta^{}_{\!\vL}\, \defeq  \sum_{x\in\vL} \delta^{}_{x}
\]
is a pure point measure; see \cite[Ch.~9]{TAO} for details. Note that
the dynamical spectrum is pure point as well; see \cite{BL} and
references therein.

\begin{example}\label{ex:Fibo}
  Consider a CPS with $d=1$ and $H=\RR$, and the planar lattice
\[
   \cL \, = \, \bigg\langle\!
   \binom{1}{1}, \binom{\tau}{1-\tau} 
   \!\bigg\rangle_{\!\!\ZZ} \subset\, \RR^{2}, 
\]
where $\tau=\bigl(1+\sqrt{5}\, \bigr)/2$ is the golden ratio. The
projection of $\cL$ to physical space is the $\ZZ$-module
$L=\ZZ[\tau]= \{m+n\tau\mid m,n\in\ZZ\}$, and similarly
$L^{\star} = \ZZ[\tau]$. The $\star$-map acts as algebraic
conjugation $\sqrt{5}\mapsto -\sqrt{5}$, so that
$(m+n\tau)^{\star}=m+n-n\tau$.  The \emph{Fibonacci model set} is
$\vL=\oplam(W)$ with window $W = (-1,\tau-1]$. As indicated in
Figure~\ref{fig:goldproj}, the two different point types (left
endpoints of long and short intervals, respectively) in the Fibonacci
model set are obtained as model sets for the two smaller windows
$W^{}_{\! L} = (-1,\tau-2]$ and $W^{}_{\! S} = (\tau-2,\tau-1]$, with
$W=W^{}_{\! L}\dotcup W^{}_{\! S}$.\exend
\end{example}

\begin{figure}
\centerline{\includegraphics[width=0.8\textwidth]{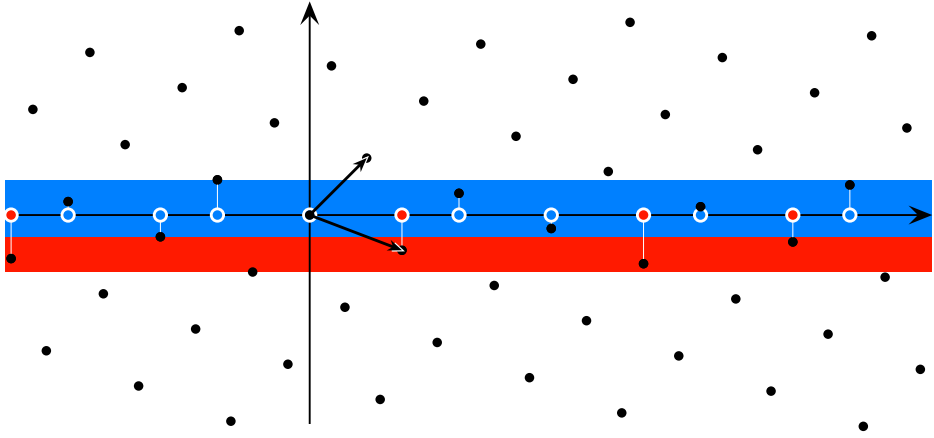}}
\caption{\label{fig:goldproj}Projection description of the Fibonacci
  model set from Example~\ref{ex:Fibo}. The horizontal line represents
  the physical space, the vertical line the internal space.  The black
  points form the lattice $\cL$. Points inside the strip
  $\RR\times W$ are projected to physical space to produce the
  model set $\vL$.}
\end{figure}

\subsection{Inflation rules and exact 
renormalisation}\label{subsec:renorm}

Another important construction of aperiodic sequences and tilings is
based on substitution and inflation rules; see \cite[Chs.~4--5]{TAO}
for background. The idea is perhaps most easily phrased in terms of
tilings. Starting from a (finite) set of prototiles (equivalence
classes of tiles under translation), an \emph{inflation rule} consists
of a rescaling (more generally, an expansive linear transformation) of
the prototiles and a subsequent dissection of the rescaled tiles into
prototiles of the original shape and size. Iterating such an inflation
rule produces tilings of space, which generically will not be
periodic. Let us illustrate this with the example of the Fibonacci
model set from above, which can also be constructed by an inflation
rule (note that this is not generally the case: Cut and project
tilings generically do \emph{not} possess an inflation symmetry, and
inflation tilings generally do not allow an embedding into a
higher-dimensional lattice with a bounded window).

\begin{example}\label{ex:fiboinf}
  Consider two prototiles (intervals) in $\RR$, a long interval $a$ of
  length $\tau$ and a short interval $b$ of length $1$. The
  (geometric) Fibonacci inflation rule with inflation factor $\tau$ is
\[
   \includegraphics[width=0.6\textwidth]{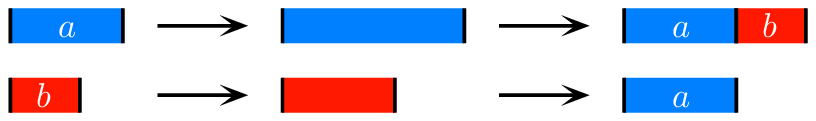}
\] 
Starting from an initial configuration of, say, two adjacent large
tiles with common vertex at the origin, and iterating this rule
produces a tiling of $\RR$ which is a fixed point under the
square of the inflation rule. Denoting the set of left endpoints of
intervals of type $a$ by $\vL_{a}$ and of type $b$ by
$\vL_{b}$, we have $\vL_{a,b}\subset \ZZ[\tau]$
by construction, which are model sets in the description of
Example~\ref{ex:Fibo}; see \cite[Ex.~7.3]{TAO} for details.
\exend
\end{example}

The inherent recursive structure of the inflation approach not only
leads to recursive relations for the point sets constructed in this
way, but also ensures the existence of a set of renormalisation
equations for their two-point correlation functions; see \cite{BG15}
and references therein for details. Here, we illustrate this for the
Fibonacci case.

\begin{example}\label{ex:fiborec}
  Consider the point set
  $\vL=\vL_{a}\dotcup\vL_{b}$ constructed by the
  inflation of Example~\ref{ex:fiboinf}.  Define the \emph{pair
    correlation functions} (or coefficients)
  $\nu^{}_{i j} (z)$ with $i,j \in \{ a,b\}$ as the
  \emph{relative} frequency (frequency per point of $\vL$) of
  two points in $\vL$ at distance $z$, subject to the condition that
  the left point is of type $i$ and the right point of type
  $j$.  The pair correlation functions exist (as a consequence of
  unique ergodicity) and satisfy the relations
  $\nu^{}_{i j} (z)=0$ for any
  $z\not\in \vL-\vL$, $\nu^{}_{i j} (0)=0$ for
  $i\neq j$, and
  $\nu^{}_{i j} (-z) = \nu^{}_{j i} (z)$ for any
  $z$.

The recognisability of our inflation rule then implies the following
set of exact renormalisation equations \cite{BG15,NM,BGM},
\begin{equation}\label{eq:Fibo-rec}
\begin{split}
   \nu^{}_{aa} (z) \, & = \, \tfrac{1}{\tau} \bigl(
     \nu^{}_{aa} (\tfrac{z}{\tau}) + \nu^{}_{ab} (\tfrac{z}{\tau})
    +\nu^{}_{ba} (\tfrac{z}{\tau}) + \nu^{}_{bb} (\tfrac{z}{\tau}) 
       \bigr) , \\
   \nu^{}_{ab} (z) \, & = \, \tfrac{1}{\tau} \bigl(
      \nu^{}_{aa} (\tfrac{z}{\tau} - 1) + 
      \nu^{}_{ba} (\tfrac{z}{\tau} - 1)  \bigr), \\
   \nu^{}_{ba} (z) \, & = \, \tfrac{1}{\tau} \bigl(
      \nu^{}_{aa} (\tfrac{z}{\tau} + 1) + 
      \nu^{}_{ab} (\tfrac{z}{\tau} + 1) \bigr), \\
   \nu^{}_{bb} (z) \, & = \, \tfrac{1}{\tau} \bigl( 
      \nu^{}_{aa} (\tfrac{z}{\tau}) \bigr),
\end{split}
\end{equation}
where $z\in \ZZ[\tau]$ and $\nu^{}_{ij} (z) = 0$ whenever
$z\not\in \vL_{j} - \vL_{i}$. This set of
equations has a unique solution if one of the single-letter
frequencies $\nu^{}_{aa} (0), \nu^{}_{bb} (0)$ is given; see
\cite{BG15} for details and further examples.

The autocorrelation measure for the point set $\vL$ is
of the form
\[
    \gamma^{}_{\vL} \, =  \sum_{z\in\vL - \vL} 
    \eta (z)\,  \delta_{z}
\]
with autocorrelation coefficients
\[
    \eta(z) \, \defeq \, \lim_{R\to\infty} 
    \frac{\card \bigl( \vL^{}_{R} \cap (z+\vL^{}_{R}) 
    \bigr)}{2R}  ,
\]
where $\vL^{}_{R} \defeq \vL\cap [-R,R]$. Again, these
coefficients exist by unique ergodicity. The autocorrelation
coefficients can be expressed in terms of the pair correlation
functions as
\begin{equation}\label{eq:rel-nu} 
   \frac{ \eta (z)}{\dens (\vL)} \, = \, 
    \nu^{}_{aa} (z) +  \nu^{}_{ab} (z) +  
    \nu^{}_{ba} (z) +  \nu^{}_{bb} (z) .
\end{equation}
By taking Fourier transforms, this allows us to use the
renormalisation approach to analyse the diffraction of
$\vL$; see \cite{BFGR,BaGriMa,BGM} for details. 
\exend
\end{example} 

The approach of Example~\ref{ex:fiborec} can be applied to inflation
tilings in quite a general setting, as it only requires
recognisability (not necessarily local) of the inflation
rule. Studying the corresponding diffraction spectrum and, in
particular, its spectral components, naturally leads to the
investigation of matrix iterations and their asymptotic behaviour; see
\cite{BFGR,BaGriMa,BGM} for recent work in this direction, as well as
\cite{BuSol,BuSol2} for related work in the more general setting of
matrix Riesz products for substitution systems with arbitrary choices
of length scales.

\subsection{Lyapunov exponents for (effective) single
    matrix iterations}\label{subsec:iter}

If $M \in \Mat (d, \CC)$ is a fixed matrix, the asymptotic behaviour
of $ \| M^n v \|$ as $n\to\infty$ with $v\in \CC^d$ can be understood
in terms of the Lyapunov exponents of this iteration, via determining
\[
    \lim_{n\to\infty} \myfrac{1}{n} \log \| M^{n} v\| \pts ;
\]
see \cite{BP,Viana} for background. In this simple single matrix case,
the \emph{Lyapunov spectrum} of $M$ is
\[
    \sigma^{}_{\mathrm{L}} (M) \, = \,
    \{ \log \pts \lvert \mu \rvert :
    \mu \in \sigma (M) \} \pts ,
\]
where $\sigma (M)$ denotes the spectrum of $M$; compare \cite{BP}. By
an expansion in the principal vectors and eigenvectors of $M$, one can
also extract the explicit scaling behaviour as $n\to\infty$, for any
given $v$.

Slightly more interesting and relevant to us is the following
situation. Consider a matrix family $B (k)$ for $k \geqslant 0$ that
is smooth (in fact, analytic) in $k$, with
$\lim_{k \searrow 0} B (k) = M$. Then, we are interested in the
asymptotic behaviour of
\[
   \| B(\lambda^{-n} k) B(\lambda^{-n+1} k) 
   \cdots B(\lambda^{-1} k)\pts v \|
\]
for fixed $v$ as $n\to\infty$, which means that we consider a specific
\emph{matrix cocycle} of $k \mapsto k/\lambda$ on $\RR$, with
$\lambda > 1$; see \cite{BP,Viana} for background. Due to the relation
with $M=B(0)$, the Lyapunov spectrum of the cocycle agrees with that of
$M$, so
\begin{equation}\label{eq:lspec}
  \lim_{n\to\infty} \myfrac{1}{n} \,
  \log  \| B(\lambda^{-n} k) B(\lambda^{-n+1} k) 
  \cdots B(\lambda^{-1} k) \pts v \| \, \in \,
  \sigma^{}_{\mathrm{L}} (M) \pts .
\end{equation}
This behaviour emerges from the observation that the cocycle, with
increasing length but fixed $k$, more and more looks like a
multiplication by powers of $M$.

\section{Aperiodic systems with pure point spectrum}\label{sec:pp}

Let us set the scene by considering a paradigmatic example in some
detail, which is actually approachable by two different methods,
namely by the projection formalism and by the renormalisation approach.
Common to all examples in this section is the fact that $Z(k)$
is not a continuous function of $k$, which requires some extra care.

\subsection{The Fibonacci chain and related systems}

Consider the primitive substitution $\varrho^{}_{\mathrm{F}}$ on the
binary alphabet $\cA = \{ a,b\}$, as defined by $a \mapsto ab$ and $b
\mapsto a$, which is the symbolic substitution that underlies
Example~\ref{ex:Fibo}. The Abelianisation leads to the corresponding
\emph{substitution matrix}
\[
  M^{}_{\mathrm{F}} \, = \, \begin{pmatrix}1 & 1 \\ 1 & 0
  \end{pmatrix}.
\]
Its Perron--Frobenius (PF) eigenvalue is the golden mean, $\tau$. The
corresponding right and left eigenvectors code the relative
frequencies of letters and the natural interval lengths for the
geometric representation, respectively.

There are two bi-infinite fixed points for $\varrho^{2}_{\mathrm{F}}$,
where the legal seeds are $a|a$ and $b|a$, with the vertical line
denoting the location of the origin; see \cite[Sec.~4]{TAO} for
details and background. We choose the one with seed $a|a$ and let
$\XX$ define the \emph{symbolic} hull generated by it via an orbit
closure under the shift action, which is actually independent of the
choice we made.  Now, we turn this $\XX$ into a \emph{tiling system}
by fixing natural tile (or interval) lengths according to the left PF
eigenvector, namely $\tau$ for $a$ and $1$ for $b$. The left endpoints
of the tiles, for the tiling that emerges from our chosen fixed point
with seed $a|a$, form a Delone set with distances $\tau$ and $1$, and
the orbit closure (in the local topology) under the (continuous)
translation action by $\RR$ gives the \emph{geometric} hull $\YY$ we
are interested in.
 
Let $\vL \in \YY$ be arbitrary, and consider the Dirac comb
$\delta^{}_{\! \vL}$, which is a translation-bounded measure with
well-defined autocorrelation $\gamma$ and diffraction
$\widehat{\gamma}$.  The latter does not depend on the choice of
$\vL$, and is given by
 \begin{equation}\label{eq:F-diff}
      \widehat{\gamma} \, = \sum_{k\in L^{\circledast}}
      I(k) \, \delta^{}_{k}   \quad \text{with} \quad
      I(k) \, = \, \dens (\vL)^2 \sinc \bigl( \pi \tau k^{\star} \bigr)^2 ,
 \end{equation}
 where $L^{\circledast} = \ZZ[\tau]/\text{\small $\sqrt{5}$}\subset
 \QQ(\text{\small $\sqrt{5}$}\, )$ is the \emph{Fourier module},
 $k^{\star}$ is obtained from $k$ by replacing $\sqrt{5}$ with
 $-\sqrt{5}$, and $\sinc (x) = \frac{\sin (x)}{x}$; see
 \cite[Sec.~9.4.1]{TAO} for a derivation. Note that $L^{\circledast}$
 is also the dynamical spectrum (in additive notation) of the
 dynamical system $(\YY,\RR)$, which is pure point with all
 eigenfunctions being continuous \cite{Boris,Daniel}.
 
 Behind this description lies the fact that the Delone set of our
 special fixed point is the \emph{regular model set} of
 Example~\ref{ex:Fibo}, with CPS $(\RR, \RR, \cL)$ and a half-open
 window of length $\tau$, where our choice of the lattice is
 $\cL = \{ (x, x^{\star}) : x \in \ZZ[\tau] \}$, which is the
 Minkowski embedding of $\ZZ[\tau]$ as a planar lattice in
 $\RR \times \RR \simeq \RR^2$. The $\star$-map then is algebraic
 conjugation in the quadratic field $\QQ ( \mbox{\small $\sqrt{5}$} \,
 )$, as induced by $\sqrt{5} \mapsto - \sqrt{5}$.

 More generally, if we use the same CPS with an interval of length $s$
 as window, the diffraction measure of the resulting point set is of
 the same form as in Eq.~\eqref{eq:F-diff}, with the intensity now
 being $I (k) = I (0) \pts \sinc (\pi s \pts k^{\star})^2$.  If one
 considers a sequence of positions $ ( k /\tau^{\ell} )$ with $k \in
 L^{\circledast}$, it is clear that $I ( k / \tau^{\ell} ) = \cO
 \bigl( \tau^{-2\ell} \bigr)$ as $\ell\to\infty$, because $\sinc (x) =
 \cO (x^{-1} )$ as $x \to\infty$. As we shall discuss later in
 Remark~\ref{rem:generic}, this leads to an asymptotic behaviour of
 the form $Z(k) = \cO (k^2)$ as $k\,\raisebox{2pt}{$\scriptscriptstyle
  \searrow$}\, 0$.

 In what follows, we shall see that the decay of $Z(k)$ from
 Eq.~\eqref{eq:Z-def} can be faster, provided $s$ takes special
 values.  This result is implicit in \cite{GL} and was recently
 re-derived, by slightly different methods, in \cite{Josh1}.

\begin{prop}\label{prop:module-scale}
  Let\/ $\vL = \oplam (W)$ be a regular model set in the CPS\/
  $(\RR, \RR, \cL)$ of the Fibonacci chain, where the window\/
  $W\subset \RR$ is an interval of length\/ $s\in \ZZ[\tau]$. Then,
  with\/ $L^{\circledast} = \ZZ[\tau]/\mbox{\small $\sqrt{5}$}$ as
  above, the diffraction measure of\/ $\delta^{}_{\! \vL}$ is given
  by\/
  $\widehat{\gamma} = I(0) \sum_{k\in L^{\circledast} } \sinc
  \bigl(\pi s \pts k^{\star} \bigr)^2$, and the intensities along a
  sequence\/ $( k/\tau^{\ell})^{}_{\ell\in\NN_0}$, for any\/
  $0\ne k\in L^{\circledast}$, decay like $\tau^{-4\ell}$ as\/
  $\ell\to\infty$.
\end{prop}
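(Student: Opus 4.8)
The plan is to take the explicit form of $\widehat{\gamma}$ as a starting point and reduce the scaling assertion to an elementary estimate on $\lvert\sin(\pi s\pts\tau^{\ell}k^{\star})\rvert$, whose proof rests on one arithmetic fact about the trace of the quadratic field $\QQ(\sqrt{5}\,)$. For the formula for $\widehat{\gamma}$ itself there is nothing new to do: it is exactly the statement recalled just before the proposition and derived in \cite[Sec.~9.4.1]{TAO}. For a regular model set $\oplam(W)$ in this CPS whose window is an interval of length $s$, the window contribution to the diffraction amplitude at $k\in L^{\circledast}$ is, up to a phase, the Fourier transform of the indicator of an interval of length $s$ evaluated at the internal coordinate $k^{\star}$, hence a $\sinc$; normalising by the $k=0$ value $I(0)=\widehat{\gamma}(\{0\})$ gives $\widehat{\gamma}=I(0)\sum_{k\in L^{\circledast}}\sinc(\pi s\pts k^{\star})^{2}\,\delta^{}_{k}$, i.e. Eq.~\eqref{eq:F-diff} with $I(k)=I(0)\sinc(\pi s\pts k^{\star})^{2}$. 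The entire content of the proposition therefore lies in the decay of $I(k/\tau^{\ell})$.

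First I would record how the $\star$-map acts along the sequence. Since $\star$ is the automorphism $\sqrt{5}\mapsto-\sqrt{5}$ of $\QQ(\sqrt{5}\,)$, one has $\tau^{\star}=1-\tau=-\tau^{-1}$, so $(\tau^{\ell})^{\star}=(-1)^{\ell}\tau^{-\ell}$, and, since $\sinc$ is even,
\[
   I(k/\tau^{\ell}) \, = \, I(0)\,\sinc\bigl(\pi s\,(k/\tau^{\ell})^{\star}\bigr)^{2}
   \, = \, I(0)\,\frac{\sin^{2}(\pi x^{}_{\ell})}{(\pi x^{}_{\ell})^{2}} ,
   \qquad x^{}_{\ell} \,\defeq\, s\,\tau^{\ell}k^{\star} .
\]
Since $s>0$ is a length and $k\neq 0$, neither $s$ nor $k^{\star}$ vanishes, so $\lvert x^{}_{\ell}\rvert=\lvert s\rvert\,\lvert k^{\star}\rvert\,\tau^{\ell}$ and the denominator grows like $\tau^{2\ell}$. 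For a generic real $s$ nothing more can be said, and the naive bound $\sinc(x)=\cO(x^{-1})$ only yields $I(k/\tau^{\ell})=\cO(\tau^{-2\ell})$; the extra factor $\tau^{-2\ell}$ has to come from the numerator once $s\in\ZZ[\tau]$.

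The crucial step is thus the claim that $x^{}_{\ell}+x^{}_{\ell}^{\star}\in\ZZ$. Here $k\in L^{\circledast}=\tfrac{1}{\sqrt{5}}\ZZ[\tau]$ means $\sqrt{5}\,k^{\star}\in\ZZ[\tau]$, and since $s,\tau^{\ell}\in\ZZ[\tau]$ this gives $\sqrt{5}\,x^{}_{\ell}\in\ZZ[\tau]$; writing $\sqrt{5}\,x^{}_{\ell}=p^{}_{\ell}+q^{}_{\ell}\tau$ with $p^{}_{\ell},q^{}_{\ell}\in\ZZ$ and using $\tau-\tau^{\star}=\sqrt{5}$, one gets $x^{}_{\ell}+x^{}_{\ell}^{\star}=q^{}_{\ell}\in\ZZ$ in a single line (equivalently, the trace of $\QQ(\sqrt{5}\,)/\QQ$ maps $\tfrac{1}{\sqrt{5}}\ZZ[\tau]$ into $\ZZ$). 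Consequently $\sin(\pi x^{}_{\ell})=\pm\sin(\pi x^{}_{\ell}^{\star})$, and since $x^{}_{\ell}^{\star}=(-1)^{\ell}s^{\star}\tau^{-\ell}k$ with $\lvert\tau^{\star}\rvert=\tau^{-1}$,
\[
   \bigl\lvert\sin(\pi x^{}_{\ell})\bigr\rvert
   \, = \, \bigl\lvert\sin(\pi x^{}_{\ell}^{\star})\bigr\rvert
   \, \leqslant \, \pi\,\lvert x^{}_{\ell}^{\star}\rvert
   \, = \, \pi\,\lvert s^{\star}\rvert\,\lvert k\rvert\,\tau^{-\ell} .
\]
Inserting this together with $\lvert x^{}_{\ell}\rvert=\lvert s\rvert\,\lvert k^{\star}\rvert\,\tau^{\ell}$ into the previous display yields
$I(k/\tau^{\ell})\leqslant I(0)\,\lvert s^{\star}\rvert^{2}\lvert k\rvert^{2}\bigl(\lvert s\rvert^{2}\lvert k^{\star}\rvert^{2}\bigr)^{-1}\tau^{-4\ell}$, and for the matching lower bound I would note that $x^{}_{\ell}^{\star}\neq 0$ for every $\ell$ and $x^{}_{\ell}^{\star}\to 0$, so that eventually $0<\lvert x^{}_{\ell}^{\star}\rvert<\tfrac12$ and $\sin(\pi x^{}_{\ell}^{\star})=\pi x^{}_{\ell}^{\star}\bigl(1+o(1)\bigr)$; hence
$I(k/\tau^{\ell})=I(0)\,\lvert s^{\star}\rvert^{2}\lvert k\rvert^{2}\bigl(\lvert s\rvert^{2}\lvert k^{\star}\rvert^{2}\bigr)^{-1}\tau^{-4\ell}\bigl(1+o(1)\bigr)$, which is precisely the asserted decay like $\tau^{-4\ell}$.

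The only genuinely non-routine ingredient is the integrality $x^{}_{\ell}+x^{}_{\ell}^{\star}\in\ZZ$: this is exactly the point at which the hypothesis $s\in\ZZ[\tau]$ enters, and it is what upgrades the generic $\tau^{-2\ell}$ to $\tau^{-4\ell}$. Everything else — the diffraction formula, the identity $\tau^{\star}=-\tau^{-1}$, and the elementary two-sided bound $2\lvert t\rvert\leqslant\lvert\sin\pi t\rvert\leqslant\pi\lvert t\rvert$ for $\lvert t\rvert\leqslant\tfrac12$ — is standard.
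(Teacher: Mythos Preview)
Your argument is correct and follows essentially the same route as the paper: both reduce $\sin(\pi x^{}_{\ell})$ to $\pm\sin(\pi x_{\ell}^{\star})$ with $\lvert x_{\ell}^{\star}\rvert=\cO(\tau^{-\ell})$, the only cosmetic difference being that you invoke the integrality of the trace on $\tfrac{1}{\sqrt{5}}\ZZ[\tau]$ directly, whereas the paper carries out the same reduction via the Fibonacci identity $\tau^{n}=\sqrt{5}\,f^{}_{n}+(-1)^{n}\tau^{-n}$. Your trace formulation is slightly more transparent and keeps explicit track of the leading constant, but the underlying mechanism is identical.
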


\begin{proof}
  The statement on the diffraction measure, $\widehat{\gamma}$, is a
  direct consequence of the results in \cite[Sec.~9.4.1]{TAO}. It does
  not depend on the position of the window, or on whether the interval
  is closed, open or half-open.
  
 Now,  $s\in\ZZ[\tau]$ means $s=a+b \tau$ with $a,b\in\ZZ$.
 Let $0\ne k\in L^{\circledast}$, so 
 $k = \kappa/\mbox{\small $\sqrt{5}$}$ with 
 $\kappa = m + n \tau$ for some $m,n \in \ZZ$, 
 excluding $m=n=0$. Applying the $\star$-map then gives
\[
    I \Bigl( \myfrac{k}{\tau^{\ell}}\Bigr) \, = \, \dens (\vL)^2 \,
    \sinc \Bigl( \myfrac{\pi \tau^{\ell} s \pts 
        \kappa^{\star}}{\mbox{\small $\sqrt{5}$}}
    \Bigr)^2 ,
\] 
with $\ell\in\NN_0$. To continue, we recall the relation
\begin{equation}\label{eq:sin-red}
     \sin (m \pi + x) \, = \, (-1)^m \sin (x)
     \qquad \text{for $m\in\ZZ$ and $x\in\RR$}.
\end{equation}
Moreover, with $f_n$ denoting the Fibonacci numbers as
defined by $f^{}_{0} = 0$, $f^{}_{1} = 1$ together with the
recursion $f^{}_{n+1} = f^{}_{n} + f^{}_{n-1}$, we will need
the well-known formula
\begin{equation}\label{eq:fib-form}
      f^{}_n \, = \, \myfrac{1}{\mbox{\small $\sqrt{5}$}} \,
      \Bigl( \tau^n - \bigl( -1/\tau\bigr)^n \Bigr) ,
\end{equation}
which holds for all $n\in\ZZ$. Then, as $\ell\to\infty$, we get
\[
   \sin \Bigl( \myfrac{\pi \tau^{\ell} s \pts \kappa^{\star}}
    {\mbox{\small $\sqrt{5}$}}
    \Bigr)^2   \,  = \:
    \sin \Bigl( \myfrac{\pi \pts \lvert s \pts 
         \kappa^{\star} \rvert}{\mbox{\small
    $ \sqrt{5}$}} \, \tau^{-\ell}  \Bigr)^2  
    \, = \, \myfrac{\pi^2 \bigl( s \pts \kappa^{\star} \bigr)^2}{5}
      \, \tau^{-2\ell}  \, + \cO \bigl( \tau^{-6 \ell}\bigr) ,
\]
where the first step follows from using Eq.~\eqref{eq:fib-form}
to replace $\tau^{\ell}/\mbox{\small $\sqrt{5}$}$
and then reducing the argument via Eq.~\eqref{eq:sin-red}, which is
possible because all Fibonacci numbers are integers. The second step
uses the Taylor approximation $\sin (x) = x + \cO (x^3)$ for small $x$.

Inserting this expression into the formula for the intensity gives
\begin{equation}\label{eq:fiboint}
     I \Bigl( \myfrac{k}{\tau^{\ell}} \Bigr) \, = \:
     c(s,k) \, \tau^{-4\ell} + \cO \bigl( \tau^{-8 \ell} \bigr) 
     \qquad \text{as $\ell\to\infty$},
\end{equation}
where the constant derives from the previous calculation.
\end{proof}

Analysing the steps of the last proof, one finds
\[
    c (s,k) \, = \, \dens (\vL)^2 \pi^2 s^2 (k^{\star})^2,
\]
where $k^{\star}$ lies in the window and is thus bounded.
So, for any fixed $s \in \ZZ [\tau]$, we have $c (s,k) =
\cO (1)$, which allows us to sum over the inflation series
of peaks as follows. Consider
\[
    \varSigma (k) \, \defeq \sum_{\ell=0}^{\infty}  
    I \Bigl( \myfrac{k}{\tau^{\ell}} \Bigr) ,
    % \, = \, \myfrac{ c (a,b; m,n) \pts \tau^4}{\tau^4-1}
\]
where the previous estimates lead to the asymptotic behaviour
\begin{equation}\label{eq:sig-scal}
    \varSigma \Bigl( \myfrac{k}{\tau^{\ell}} \Bigr) 
    \, \sim \: \widetilde{c} (k) \pts \varSigma (k) \, \tau^{-4\ell}
    \qquad \text{as $\ell \to \infty$} \pts ,
\end{equation}
with $\widetilde{c} (k) = \cO (1)$.
Now, observe that the diffraction measure satisfies
\[
    Z(k) \, = \,
    \widehat{\gamma} \bigl( (0,k \pts ] \bigr) \, = \,
    \sum_{\kappa \in L^{\circledast} \cap 
    (\nts \frac{k}{\tau}, k \pts ]} 
    \varSigma (\kappa) \pts ,
\]
which then, via \eqref{eq:sig-scal}, implies the asymptotic behaviour
\[
      Z \Bigl( \myfrac{k}{\tau^{\ell}} \Bigr) 
      \, \asymp \, Z(k) \, \tau^{-4\ell} .
\]
The upper bound is a consequence of the summability of the geometric
series in conjunction with the boundedness of $\widetilde{c}(k)$,
while the lower bound follows from the existence of at least one
series of non-trivial peaks with a scaling according to
\eqref{eq:fiboint} and the behaviour of $\sinc(x)$ near $x=0$.

Put together, we have the following result.    

\begin{theorem}\label{thm:fibo}
  Under the assumptions of
  Proposition~\textnormal{\ref{prop:module-scale}}, one has
\[
  Z (k) \, \asymp \, k^4  \qquad
  \text{as $k\,\raisebox{2pt}{$\scriptscriptstyle
  \searrow$}\, 0$}.
\]
  In particular, $Z(k)=\cO(k^4)$.\qed
\end{theorem}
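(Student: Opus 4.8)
The plan is to use the single-peak estimate \eqref{eq:fiboint} to control an entire inflation orbit of Bragg peaks at once, to reassemble $Z$ from these orbit sums, to obtain the scaling along one geometric sequence $(k^{}_{0}/\tau^{\ell})^{}_{\ell\in\NN_0}$, and then to pass to the continuous limit $k\searrow 0$ by monotonicity. The starting point is the identity $Z(k^{}_{0})=\sum_{\kappa\in F}\varSigma(\kappa)$ with $F\defeq L^{\circledast}\cap(k^{}_{0}/\tau,k^{}_{0}]$, recorded just before the theorem. Since $\tau$ is a unit in $\ZZ[\tau]$ (indeed $\tau(\tau-1)=1$), multiplication by $\tau$ is a bijection of the Fourier module $L^{\circledast}=\ZZ[\tau]/\sqrt{5}$ onto itself, so $F$ is a fundamental domain for the induced $\ZZ$-action on $L^{\circledast}\cap(0,\infty)$; applying the same formula for $Z$ at $k^{}_{0}/\tau^{\ell}$ and using $\tau^{-\ell}F=L^{\circledast}\cap(k^{}_{0}/\tau^{\ell+1},k^{}_{0}/\tau^{\ell}]$ then gives $Z(k^{}_{0}/\tau^{\ell})=\sum_{\kappa\in F}\varSigma(\kappa/\tau^{\ell})$ for every $\ell\in\NN_0$, a finite sum because $\widehat{\gamma}$ is a locally finite measure.

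Next, I would feed \eqref{eq:fiboint} into each orbit sum and sum the resulting geometric series, which yields the scaling $\varSigma(\kappa/\tau^{\ell})\asymp\varSigma(\kappa)\,\tau^{-4\ell}$ of \eqref{eq:sig-scal}, with multiplicative constants that are \emph{uniform} in $\kappa\in F$; this uniformity is the technical crux and is addressed below. Inserting it into $Z(k^{}_{0}/\tau^{\ell})=\sum_{\kappa\in F}\varSigma(\kappa/\tau^{\ell})$ then gives $Z(k^{}_{0}/\tau^{\ell})\asymp Z(k^{}_{0})\,\tau^{-4\ell}$, i.e.\ $Z(k)\asymp k^{4}$ along the sequence $(k^{}_{0}/\tau^{\ell})^{}_{\ell\in\NN_0}$. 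To reach the genuine limit, I would use that $Z$ is non-decreasing, being the distribution function of the positive measure $\widehat{\gamma}$ (here merely a step function, as flagged at the start of this section, but monotonicity is all that is needed): any small $k$ lies in some interval $(k^{}_{0}\tau^{-\ell-1},k^{}_{0}\tau^{-\ell}]$, so $Z(k^{}_{0}\tau^{-\ell-1})\leqslant Z(k)\leqslant Z(k^{}_{0}\tau^{-\ell})$, and since both endpoints are comparable to $k$, the sandwich yields $c^{}_{1}k^{4}\leqslant Z(k)\leqslant c^{}_{2}k^{4}$ for all sufficiently small $k$, with $c^{}_{1},c^{}_{2}>0$ independent of $k$. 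This is the assertion $Z(k)\asymp k^{4}$, of which $Z(k)=\cO(k^{4})$ is the upper half.

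The main obstacle is precisely the uniformity in $\varSigma(\kappa/\tau^{\ell})\asymp\varSigma(\kappa)\,\tau^{-4\ell}$ over the infinite set $F$, equivalently the exchange of $\ell\to\infty$ with the summation over $\kappa\in F$. Here I would exploit that the reduction behind \eqref{eq:fiboint}, via \eqref{eq:sin-red} and \eqref{eq:fib-form}, is an \emph{exact} identity for every $m\in\NN_0$, rewriting $\sin\bigl(\pi s(\kappa/\tau^{m})^{\star}\bigr)^{2}$, modulo $\pi\ZZ$ in the argument, as $\sin\bigl(\pi\tau^{-m}\lvert s^{\star}\kappa\rvert\bigr)^{2}$, where $\lvert s^{\star}\kappa\rvert$ is non-zero for $\kappa\neq0$ and, since $F$ is a bounded interval and $s$ is fixed, stays below a bound depending only on $s$ and $k^{}_{0}$. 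Combined with the elementary two-sided estimate $(2y/\pi)^{2}\leqslant\sin^{2}y\leqslant y^{2}$ on $[0,\tfrac{\pi}{2}]$, this gives $I(\kappa/\tau^{m})\asymp c(s,\kappa)\,\tau^{-4m}$ with \emph{uniform} constants once $m$ exceeds a threshold depending only on $s$ and $k^{}_{0}$; summing over $m$ yields $\varSigma(\kappa)\asymp c(s,\kappa)$ uniformly, hence $\sum_{\kappa\in F}c(s,\kappa)\asymp Z(k^{}_{0})<\infty$, so that dominated convergence legitimises the term-by-term scaling in $Z(k^{}_{0}/\tau^{\ell})=\sum_{\kappa\in F}\varSigma(\kappa/\tau^{\ell})$. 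The matching \emph{lower} bound then needs only that some --- in fact every --- $\kappa\in F$ has $c(s,\kappa)>0$, which is the ``non-trivial series of peaks'' invoked after \eqref{eq:sig-scal}; this holds because, for a fixed admissible window length $s$, the leading coefficient in \eqref{eq:fiboint} vanishes exactly when $\kappa=0$.
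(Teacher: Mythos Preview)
Your proposal is correct and follows essentially the same route as the paper: the orbit-sum decomposition $Z(k^{}_{0})=\sum_{\kappa\in F}\varSigma(\kappa)$, the $\tau^{-4\ell}$-scaling of each $\varSigma$ via Proposition~\ref{prop:module-scale}, summation over $F$, and the final appeal to a single non-trivial inflation series for the lower bound. You are in fact more explicit than the paper about the two points it leaves implicit --- the \emph{uniformity} in $\kappa\in F$ of the constants in \eqref{eq:sig-scal} (which you obtain from the exact reduction identity plus the elementary two-sided bound on $\sin^{2}$, with a threshold depending only on $s$ and $k^{}_{0}$) and the passage from the geometric subsequence $(k^{}_{0}/\tau^{\ell})$ to general $k\searrow 0$ via the monotonicity of $Z$.
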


\begin{remark}\label{rem:infl}
  The cases with $s\in\ZZ[\tau]$ treated in
  Proposition~\ref{prop:module-scale} are connected in the sense that
  the minimal components\footnote{Recall that a dynamical system
    $(\XX,G)$ is \emph{minimal} when the $G$-orbit of every
    $x\in\XX$ is dense in $\XX$.} of the corresponding hulls are all
  mutually locally derivable (MLD) to each other, so they all have a
  local inflation/deflation symmetry in the sense of
  \cite{TAO}. Within this family, there is the Fibonacci chain, which
  can be constructed from a self-similar tiling inflation rule. It is
  thus natural to assume that the particularly rapid decay of $Z (k)$
  in this case can also be traced back to the inflation nature, as in
  Section~\ref{sec:noble} below.  \exend
\end{remark}   

\begin{remark}\label{rem:generic}
  For general $s$, as mentioned previously, one only has
  $I(k/\tau^{\ell})=\cO\bigl(\tau^{-2\ell}\bigr)$. Since the summation
  argument remains unchanged, one then only gets $Z(k)=\cO(k^2)$ as
  $k\,\raisebox{2pt}{$\scriptscriptstyle \searrow$}\, 0$. Unlike the
  situation treated in Theorem~\ref{thm:fibo}, it is not clear when
  one finds $Z(k)\asymp k^2$. Consequently, it is an open question
  whether intermediate decay exponents are possible, for instance as a
  result of Diophantine approximation properties.  \exend
\end{remark}

\subsection{Noble means inflations}\label{sec:noble}
These are generalisations of the Fibonacci case, with substitution
matrices $M_p=\left(\begin{smallmatrix}p & 1 \\ 1 & 0
    \end{smallmatrix}\right)$ for integer $p\in\NN$, where
  $p=1$ is the Fibonacci example. One possible substitution rule
  \cite[Rem.~4.7]{TAO} is $a\mapsto a^pb$, $b\mapsto a$, but the
  position of the letter $b$ in the image of $a$ does not matter, as
  all choices produce equivalent rules, by an application of
  \cite[Prop.~4.6]{TAO}. All noble means substitutions result in
  Sturmian sequences.

  The inflation factor $\lambda_p$ is the Perron--Frobenius eigenvalue
  of $M_p$, so
  \[
  \lambda_{p}\, =\, \myfrac{1}{2}\bigl(p+\sqrt{p^2+4}\,\bigr)
  \, =\, [p;p,p,p,\ldots]\pts ,
  \]
  with $p=1$ giving the Fibonacci case (golden mean) discussed
  above. The corresponding inflation rule works with lengths
  $\lambda_p$ and $1$ for the intervals of type $a$ and $b$,
  respectively.  Then, the arguments used above generalise directly to
  the noble means inflation case, which possess a model set
  description where both the physical and the internal space are
  one-dimensional, resulting in the same scaling exponents for the
  entire family.

  In fact, the noble means inflations can more easily be analysed
  using the renormalisation approach \cite{BFGR,BGM} mentioned in
  Section~\ref{subsec:renorm}. The two eigenvalues of the inflation
  matrix $M_{p}$ are
  $\lambda_{p}=\frac{1}{2}\big(p+\sqrt{p^2+4}\,\bigr)$ and
  $-1/\lambda_{p}$. Consequently, as explained in
  Section~\ref{subsec:iter}, the Lyapunov spectrum of the matrix
  iteration is
  $\sigma^{}_{\mathrm{L}} (M_p) = \{\log(\lambda_p) , -\log(\lambda_p)
  \}$. To continue, we need the Fourier--Bohr (FB) coefficients for
  the $a$- and $b$-positions separately, which we denote by $A_a (k)$
  and $A_b (k)$, respectively. Note that one can obtain the FB
  coefficients of any weighted version of the chain by an appropriate
  superposition, whose absolute square is the corresponding
  diffraction intensity at $k$.

  Let $A(k) = \bigl(A_a (k), A_b (k) \bigr)^T$ be the vector of FB
  coefficients at a given $k$. Then, the renormalisation approach
  gives the relation
\[
  \big\| A(\lambda^{-n}_{p} k)\big\| \, =\,
  \lambda^{-n}_{p} \pts \big\| B(\lambda^{-n}_{p}k)
  B(\lambda^{-n+1}_{p}k)\dots
  B(\lambda^{-1}_{p}k) \pts A(k)\big\| .
\]
Due to the additional prefactor, by taking logarithms, the Lyapunov
exponents are shifted by $-\log(\lambda_{p})$, and thus are $\{0 ,
-2\log(\lambda_p) \}$. The exponent $0$ is not the relevant one,
because it would correspond to intensities that do not decay as we
approach $k=0$, which contradicts the measure property of
$\widehat{\pts\gamma^{}_{p}\pts}$.  Consequently, the scaling of the
amplitudes is governed by the Lyapunov exponent $-2\log(\lambda_{p})$,
and the corresponding diffraction intensities scale as
 \[
    I(\lambda^{-\ell}_{p}k) \, \sim  \,
    c(k)\pts \lambda^{-4\ell}_{p} I(k)\pts ,
 \]
 which is the generalisation of the behaviour we observed for the
 Fibonacci case in Proposition~\ref{prop:module-scale}. On the basis of
 the same argument for the boundedness of the $c(k)$, we can sum
 the contributions and conclude as follows.

 \begin{coro} For the noble means inflations with\/ $p\in\NN$,
   one has the asymptotic behaviour 
 \[
   Z_p (k) \, = \, \widehat{\pts\gamma^{}_{p}\pts}
   \bigl( (0,k] \bigr) \, \asymp \, k^4
   \quad \text{as $k\,\raisebox{2pt}{$\scriptscriptstyle
    \searrow$}\, 0$.}
 \]
 In particular, $Z_p(k)=\cO(k^4)$.\qed 
\end{coro}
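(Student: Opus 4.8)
The plan is to run, \emph{mutatis mutandis}, the argument that produced Theorem~\ref{thm:fibo} for the Fibonacci chain, now driven by the Lyapunov-exponent analysis of the preceding paragraphs rather than by an explicit $\sinc$-formula. The per-peak input is already in place: since the relevant Lyapunov exponent of the shifted cocycle is $-2\log(\lambda_{p})$, one has
\[
   I(\lambda^{-\ell}_{p} k) \, \sim \, c(k)\pts \lambda^{-4\ell}_{p}\, I(k)
   \qquad \text{as $\ell\to\infty$}
\]
along any inflation series $(\lambda_{p}^{-\ell}k)^{}_{\ell\in\NN_0}$ with $0\ne k$ in the Fourier module $L^{\circledast}$ of the noble means chain. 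What remains is to (i) show that the constants $c(k)$ are uniformly bounded, (ii) sum the geometric series of peaks into a single scaling law, and (iii) pass from the scaling along these geometric sequences to the scaling of $Z_p(k)$ as $k\searrow 0$.

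For step~(i), I would invoke the model set description of the noble means inflations, in which both the physical and the internal space are one-dimensional, placing us in exactly the situation of Proposition~\ref{prop:module-scale}: the diffraction intensities are of the form $I(k) = I(0)\pts\sinc(\pi s\pts k^{\star})^2$ for an interval window of length $s$, and the reduction of the $\sin$-argument modulo $\pi\ZZ$ used in the proof of that proposition --- now via the Binet-type analogue of Eq.~\eqref{eq:fib-form} for the recursion $g^{}_{n+1} = p\pts g^{}_{n} + g^{}_{n-1}$ --- together with the Taylor expansion of $\sin$ yields $c(k) = \dens(\vL)^2\pi^2 s^2 (k^{\star})^2$. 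As $k^{\star}$ ranges over the relatively compact window, this gives $c(k) = \cO(1)$. Alternatively, the same uniform bound can be read off directly from the cocycle by controlling the principal-vector expansion of $B(\lambda_{p}^{-n}k)\cdots B(\lambda_{p}^{-1}k)$ along its contracting direction.

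Granting $c(k) = \cO(1)$, put $\varSigma_p(k) \defeq \sum_{\ell=0}^{\infty} I(\lambda_{p}^{-\ell}k)$. The geometric decay together with the uniform constant gives $\varSigma_p(\lambda_{p}^{-\ell}k) \sim \widetilde{c}(k)\pts\varSigma_p(k)\pts\lambda_{p}^{-4\ell}$ with $\widetilde{c}(k) = \cO(1)$, exactly as in Eq.~\eqref{eq:sig-scal}. Since $L^{\circledast}$ is $\lambda_{p}$-invariant, we obtain $Z_p(k) = \widehat{\pts\gamma^{}_{p}\pts}\bigl((0,k]\bigr) = \sum_{\kappa\in L^{\circledast}\cap(k/\lambda_{p},\pts k]}\varSigma_p(\kappa)$, a finite sum over a fundamental block, and hence $Z_p(\lambda_{p}^{-\ell}k) \asymp Z_p(k)\pts\lambda_{p}^{-4\ell}$ --- the upper bound from summability of the geometric series and boundedness of $\widetilde{c}$, the lower bound from the presence of at least one series of non-vanishing peaks scaling as above and the behaviour of $\sinc$ near $0$.

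To conclude, one upgrades this to $Z_p(k) \asymp k^4$ as $k\searrow 0$ exactly as in the Fibonacci case: fix $0\ne k_0 \in L^{\circledast}$ and, for small $k$, choose $\ell$ with $\lambda_{p}^{-\ell-1}k_0 < k \leqslant \lambda_{p}^{-\ell}k_0$, so that monotonicity of $Z_p$ sandwiches $Z_p(k)$ between $Z_p(\lambda_{p}^{-\ell-1}k_0)$ and $Z_p(\lambda_{p}^{-\ell}k_0)$, both of which are $\asymp \lambda_{p}^{-4\ell} \asymp k^4$; the statement $Z_p(k) = \cO(k^4)$ is the upper half of this. The main obstacle, as for Fibonacci, is that $Z_p$ is not continuous and jumps on the dense set of Bragg positions, so one must ensure that the comparison constants do not degenerate as $k$ sweeps through these jump points; the two-sided block estimate for the $\varSigma_p$ is precisely what prevents this.
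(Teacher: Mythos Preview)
Your proposal is correct and follows essentially the same route as the paper: the renormalisation/Lyapunov argument supplies the per-peak scaling $I(\lambda_p^{-\ell}k)\sim c(k)\pts\lambda_p^{-4\ell}I(k)$, the model set description (with its $\sinc$-intensities and bounded window in internal space) bounds the $c(k)$ uniformly, and then the same $\varSigma$-summation and fundamental-block decomposition used for Fibonacci carry over verbatim. One small slip: the sum $\sum_{\kappa\in L^{\circledast}\cap(k/\lambda_p,\pts k]}\varSigma_p(\kappa)$ is not a \emph{finite} sum --- $L^{\circledast}$ is dense --- but it is a convergent one (its value is $Z_p(k)<\infty$), and since multiplication by $\lambda_p$ is a bijection of $L^{\circledast}$ the rescaling argument goes through unchanged.
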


 Note that this result relies on the inflation structure, and thus
 only applies to inflation-symmetric cases, in line with
 Remark~\ref{rem:infl}.  Let us continue with further examples of
 inflation systems, where we state the results in a less formal
 manner.

 \subsection{Limit-periodic systems and beyond}

 Consider the two-letter substitution rule
 \[
   a\mapsto ab\, , \quad b\mapsto aa \, ,
 \]
 with substitution matrix
 $\left(\begin{smallmatrix}1 & 2\\1 & 0 \end{smallmatrix}\right)$.
 The latter is diagonalisable and has eigenvalues $2$ and $-1$. This
 substitution rule defines the \emph{period doubling} system, which is a
 paradigm of a Toeplitz-type system with limit-periodic structure; see
 \cite[Sec.~4.5]{TAO} for background and \cite[Sec.~9.4.4]{TAO} for a
 detailed discussion of its pure point diffraction measure.

 As the period doubling system arises from a primitive substitution
 rule, which in this case is identical to the inflation rule, the
 renormalisation approach of Section~\ref{subsec:renorm} applies.
 Here, the exponents (taking into account the additional factor
 $2^{-n}$ in the amplitude renormalisation equation) are
 $\{0,-\log(2)\}$, of which only the negative one is relevant (for the
 same reason as above). Consequently, the diffraction intensities
 scale as
 \[
     I(2^{-\ell}k) \, \sim \, c(k)\pts 4^{-\ell} I(k) \pts ,
 \]
 and the scaling behaviour for the period doubling system is
 $Z(k)\asymp k^2$ as $k \,\raisebox{2pt}{$\scriptscriptstyle
   \searrow$}\, 0$. Again, this follows via geometric series
 summations in conjunction with the observation that the $c(k)$ are
 bounded (for instance, via the existing model set description; see
 \cite[Ex.~7.4]{TAO}).

 The same type of analysis also works for limit-quasiperiodic systems.
 As an example, consider the two-letter substitution rule $a\mapsto
 aab$, $b\mapsto abab$ from \cite{BMS}, in its proper geometric
 realisation.  The eigenvalues of the substitution matrix are
 $\lambda_{\pm}=2\pm\sqrt{2}$. The standard renormalisation analysis
 from above for the FB coefficient now results in
 \[
   \big\| A(k/\lambda_{+})\big\| \, \sim \,
   \myfrac{2-\sqrt{2}}{2+\sqrt{2}}\, \big\| A(k)\big\| 
 \]
 for $k\,\raisebox{2pt}{$\scriptscriptstyle \searrow$}\, 0$, and thus
 in the scaling behaviour
 $I(\lambda^{-n}_{+}\pts k) \sim c(k)\pts
 \lambda_{+}^{-2n\tilde{\alpha}}$ with
 \[
   \tilde{\alpha} \, = \,
   2 - \frac{\log\pts\lvert\det(M)\rvert}{\log(\lambda_{+})}
   \, = \, 2 \,\frac{\log\bigl(1+\sqrt{2}\,\bigr)}
   {\log\bigl(2+\sqrt{2}\,\bigr)} \, \approx\, 1.436\pts ,
 \]
 where the $c(k)$ are once again bounded.  Consequently, one finds
 $Z(k)\asymp k^{2\tilde{\alpha}}$ as
 $k\,\raisebox{2pt}{$\scriptscriptstyle \searrow$}\, 0$. Our parameter
 $\tilde{\alpha}$ is related to the hyperuniformity parameter $\alpha$
 from \cite{Josh1} by $1+\alpha=2\tilde{\alpha}$.

 More generally, this formula applies to primitive binary inflation
 systems, with the appropriate interpretation of the case
 $\det(M)=0$. We discuss an example of this phenomenon in
 Section~\ref{sec:TM} below. This exceptional scaling behaviour is
 known from \cite{GL}; see also \cite{Josh1}.

\subsection{Substitutions with more than two letters} 

The methods described above are not restricted to the binary case. In
general, one can at least obtain a bound for the \mbox{small-$k$}
behaviour of $Z(k)$ from the spectral gap of the substitution matrix.
For an arbitrary substitution, one generates the corresponding
inflation rule with intervals of natural length, as determined by the
left eigenvector of the substitution matrix.

As an example, let us consider the substitution
\[
  a\mapsto abc\, , \quad
  b\mapsto ab\, , \quad
  c\mapsto b\, ,
\]
which occurs in the description of the Kolakoski sequence with run
lengths $3$ and $1$; see \cite{BS04} as well as \cite[Exs.~4.8 and
7.5]{TAO}.  The substitution matrix has spectrum
$\sigma(M)=\{\lambda,\mu,\overline{\mu}\}$, which are the roots of
the polynomial $x^3-2x^2-1$. In particular, $\lambda$ is a
Pisot--Vijayaraghavan number and a unit, and
$\lvert\mu\rvert^2=1/\lambda$. The natural interval lengths are
$\lambda(\lambda-1)$, $\lambda$ and $1$.  The Lyapunov spectrum is
$\sigma^{}_{\mathrm{L}}(M)=\{\log(\lambda),\log\lvert\mu\rvert\}=
\{\log(\lambda),-\frac{1}{2}\log(\lambda)\}$.  This leads to the
relevant amplitude scaling
\[
  \Big\| A\Bigl(\myfrac{k}{\lambda^{n}}\Bigr) \Big\|\, \sim \,
  c(k)\, \lambda^{-3n/2} \, \big\| A(k)\big\|
\]
as $n\to\infty$. With the usual arguments from above, this gives
\[
  Z(k) \, \asymp \, k^3 \quad\text{as $k\,\raisebox{2pt}
    {$\scriptscriptstyle \searrow$}\, 0\pts$.}
\]
The same asymptotic behaviour for small $k$ emerges for the plastic
number substitution, compare \cite[Exs.~3.5 and 7.6]{TAO}, which is
another ternary substitution whose characteristic polynomial has one
real and a pair of complex conjugate roots.

When one considers a substitution with three real eigenvalues of $M$,
say $\sigma(M)=\{\lambda,\lambda^{}_{1},\lambda^{}_{2}\}$, and
$\lvert\det(M)\rvert=1$, it is not immediately clear which of the two
relevant Lyapunov exponents determines the scaling behaviour. When the
characteristic polynomial is irreducible, it will be the larger of the
two in modulus. In any case, one obtains
$Z(k)=\cO(k^{2\tilde{\alpha}})$ as
$k\,\raisebox{2pt}{$\scriptscriptstyle \searrow$}\, 0$, with
\[
  \tilde{\alpha}\, =\, 1-
  \frac{\log\lvert\lambda_{i}\rvert}{\log(\lambda)}
\]
for $i=1$ or $i=2$. The analogous formula applies to larger alphabets
as well, where it remains to be determined which eigenvalue dominates
the asymptotic behaviour.

\subsection{Square-free integers}

As an example of a pure-point diffractive system of a different nature
(and which, in particular, does not possess an inflation symmetry), we
consider the point set of square-free integers in $\ZZ$.  An integer
is called \emph{square-free} if it is not divisible by the square of
any prime, which for instance means that $1$, $2$, $3$, $5$ and $6$
are square-free, while $0$, $4$, $8$, $9$ and $12$ are not. Clearly,
$n$ is square-free if and only if $-n$ is. Let
\[
    V \, = \, \{ n \in \ZZ : n \text{ is square-free} \}
\]
denote the set of square-free integers, which has density $\dens (V) =
\frac{6}{\pi^2}$. In fact, $V$ can be described by the projection
formalism \cite{BMP}, and is actually an example of a weak model set
of maximal density \cite{BHS,KR}, which gives access to the full
spectral information.

Let us now define the (discrete) hull of $V$ as $\XX_V =
\overline{\{t+V : t \in \ZZ \}}$, where the closure is taken in the
local topology. Then, one considers the topological dynamical system
$(\XX_V , \ZZ)$, which has positive topological entropy.  Some people
prefer to consider the flow under the continuous translation action of
$\RR$, which emerges via a standard suspension with a constant roof
function. By slight abuse of notation, we write $(\XX_V, \RR)$ for it,
although the hull is now different.  One usually equips $\XX_V$ with
the patch frequency or \emph{Mirsky} measure,
$\nu^{}_{\text{M}}$. This system has pure point diffraction and
dynamical spectrum,\footnote{Note that the Kolmogorov--Sinai entropy
  with respect to $\nu^{}_{\mathrm{M}}$ vanishes, as it must for a
  system with pure point spectrum. In particular, the Mirsky measure
  is not a measure of maximal entropy; see \cite{Huck} and references
  therein for more.} and the set $V$ itself is generic for
$\nu^{}_{\text{M}}$. Therefore, one can calculate the diffraction
measure from $V$ itself. It is given \cite{BMP} by
\[
   \widehat{\gamma^{}_{V}} \, = \, \dens(V)^2
   \sum_{\kappa \in L^{\circledast}} I(\kappa) \, \delta^{}_{\kappa} \, ,
   \quad \text{with $\dens (V) = \myfrac{1}{\zeta (2)}
      = \myfrac{6}{\pi^2}$} \pts ,
\]
where $\zeta(s)$ denotes Riemann's zeta function.  Here,
$L^{\circledast}$ is the subset of rational numbers with cube-free
denominator, which is a subgroup of $\QQ$ and, at the same time, the
dynamical spectrum of $(\XX_V, \RR,\nu^{}_{\mathrm{M}})$ in additive
notation.  For $\kappa = \frac{m}{q}$ with $m$ and $q$ coprime
integers, one has $I(\kappa) = I(0) \pts f(q)^2$ with $I(0) = \dens
(V)^2$ and
\begin{equation}\label{eq:f}
     f (q) \, = \, \begin{cases}
     \prod_{p | q} \frac{1}{p^2 -1} \pts , 
     & \text{if $q$ is cube-free}, \\
     0 \pts , & \text{otherwise},  \end{cases}
\end{equation}
where $p | q$ means that $p$ runs through all primes that divide $q$.
In particular, $I (\kappa )$ only depends on the denominator of
$\kappa \in \QQ$, so that the diffraction measure is $1$-periodic.

Let $\NN^{(3)}$ denote the positive cube-free integers and consider $Z
(k) \defeq \widehat{\gamma^{}_{V}} \bigl( (0,k] \bigr) / I(0)$ for small
  $k>0$, where we get
\begin{equation}\label{eq:sqfreeZ}
     Z (k) \; = \sum_{\substack{q \in \NN^{(3)} \\ qk \geqslant 1}}
     \card \{ 1 \leqslant m \leqslant qk : \gcd (m,q)=1 \}
     \, f (q)^2
\end{equation}
because $I \bigl( \frac{m}{q} \bigr)$ with $\gcd (m,q) = 1$ does not
depend on $m$. Via some standard, though slightly tricky, methods from
analytic number theory, one can show \cite{BC} that
\begin{equation}\label{eq:bounds}
  \log\bigl(Z (k)\bigr) \, \sim \,  \myfrac{3}{2} \log(k)
  \quad \text{as $k\,\raisebox{2pt}{$\scriptscriptstyle
  \searrow$}\, 0\pts$}.
\end{equation}

\begin{figure}
\centerline{\includegraphics[width=0.8\textwidth]{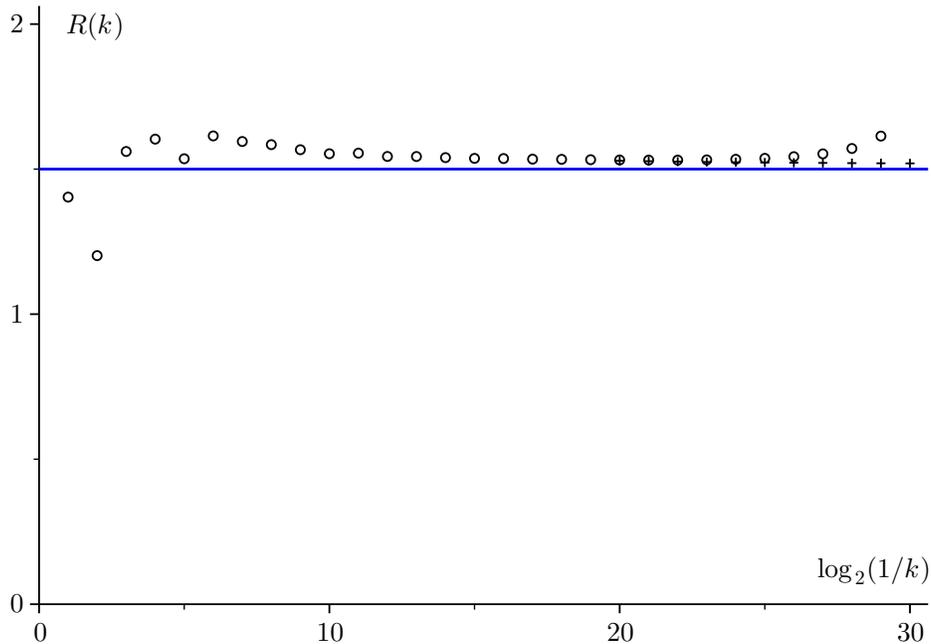}}
\caption{\label{fig:squarefree}Approximation of
  $R(k)=\log(Z(k))/\log(k)$ based on truncating the sum in
  Eq.~\eqref{eq:sqfreeZ} to cube-free numbers $q$ that are multiples of
  the first $2^{13}$ square-free numbers (circles) and the first
  $2^{20}$ square-free numbers (crosses), respectively. The blue line
  corresponds to the ratio $3/2$.}
\end{figure}

The result of a numerical approximation to the sum in
Eq.~\eqref{eq:sqfreeZ} is shown in Figure~\ref{fig:squarefree}. Here,
the sum over all cube-free numbers is rewritten as a sum over all
square-free numbers and their cube-free multiples which share the same
value of the function $f$ of Eq.~\eqref{eq:f}. Truncated sums
including up to $2^{20}$ square-free numbers are used, showing a
behaviour that is close to the asymptotic form of $Z(k)$ from
Eq.~\eqref{eq:bounds}. Note that, due to the truncation of a sum of
positive numbers, the numerical approximations for the ratio
$R(k)=\log(Z(k))/\log(k)$, where
$k\,\raisebox{2pt}{$\scriptscriptstyle \searrow$}\, 0$, are always
larger than the actual values, and apart from a couple of points at
large values of $k$, all numerical estimates are above the asymptotic
line.  Also, since fewer terms contribute to the sums for smaller $k$,
eventually the difference will become large for small $k$, as seen in
Figure~\ref{fig:squarefree} for the data using fewer square-free
numbers. A quantitive estimate of the error is difficult. For a more
detailed analysis of this asymptotic behaviour, including power-free
numbers for larger powers, we refer to \cite{BC}.

\section{Systems with absolutely continuous spectrum}\label{sec:ac}

The generic case for the emergence of absolutely continuous
diffraction is the presence of some degree of randomness. This means
that the most natural setting here is that of stochastic point
processes, and quite a bit of work has recently been done; compare
\cite{GL18} and references therein.

Our goal in this section is modest in the sense that we only aim at
collecting some one-dimensional results that are all well known, but
rarely appear together. Moreover, we are not looking into the
hyperuniform setting as in \cite{GL18}, but rather into systems that
frequently appear in practice. For this reason, we keep also this
section somewhat informal.

\subsection{Poisson and Rudin--Shapiro}

The homogeneous \emph{Poisson process} with intensity (or point
density) $1$ on the real line has diffraction measure
$\widehat{\gamma} = \delta^{}_{0} + \Leb$, where $\Leb$ denotes
Lebesgue measure. This result applies to almost every realisation of
this classic stochastic point process; see \cite[Ex.~11.6]{TAO}.  This
gives
\begin{equation}\label{eq:Poisson}
   Z^{}_{\text{P}} (k) \, = \, k
\end{equation}
in this case, which can serve as a reference for the comparison with
other stochastic processes.

The \emph{Bernoulli lattice gas} on $\ZZ$ with occupation probability
$p$ per site, see \cite[Ex.~11.2]{TAO}, leads to $\widehat{\gamma} =
p^2 \delta^{}_{\ZZ} + p (1-p) \Leb$ for a.e.\ realisation, and hence to 
\[
    Z (k) \, = \, p (1-p) k \pts .
\]
When using weights $\pm 1$ instead of $1$ and $0$, this changes to
$\widehat{\gamma} = (2p-1)^2 \delta^{}_{\ZZ} + 4 p (1-p) \Leb$ and
thus to
\[
      Z (k) \, = \, 4 p (1-p) k
\]
by \cite[Prop.~11.1]{TAO}. In particular, $p=\frac{1}{2}$ gives
$Z (k) = k$ as for the Poisson process above. 
Similar results emerge for more general Bernoulli systems, where the
slope of $Z$ is a measure of the point (or particle) density and of
the variance of the underlying random variable; see
\cite[Rem.~11.1]{TAO} for more.

The binary \emph{Rudin--Shapiro chain}, when realised as a sequence
with weights $\{ \pm 1 \}$, is a deterministic system with diffraction
$\widehat{\gamma} = \Leb$, and thus $Z (k) = k$ exactly; see
\cite[Thm.~10.2]{TAO} and references given there. In particular, this
system has linear complexity, as one can obtain it as a factor of a
substitution rule on four letters \cite[Sec.~4.7.1]{TAO}. Note that
these results apply to every element of the Rudin--Shapiro hull, which
is minimal.

One can apply a simple Bernoullisation process \cite{Bern} to the
Rudin--Shapiro chain (or to any element of its hull). Given $p\in
[0,1]$, one simply changes the sign at each site independently with
probability $p$; see \cite[Sec.~11.2.2]{TAO} for the general setting
and further details. By an application of \cite[Prop.~11.2]{TAO}, the
diffraction measure almost surely does not change, which means that we
get
\[
     Z (k) \, = \, k    \quad \text{for all } p \in [0,1] \pts .
\]
This is an interesting case of \emph{homometry},\footnote{Recall that
  two systems are called \emph{homometric} when they possess the same
  autocorrelation and hence the same diffraction measure; see
  \cite[Sec.~9.6]{TAO} for background.} as one can continuously change
the entropy of the system from $0$ (for $p=0$) to $\log (2)$ (for
$p=\frac{1}{2}$, which really gives the fair coin tossing sequence
with weights $\pm 1$). In particular, the (frequency weighted) patch
complexity has no influence on $Z$ in this case. This highlights the
limitation of this quantity (as well as that of the full diffraction
measure) as a characterisation of order.

\subsection{Random matrix ensembles}

\begin{figure}
  \includegraphics[width=0.4\textwidth]{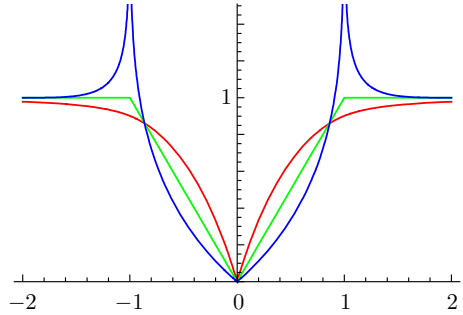}
  \caption{\label{fig:dyson}Absolutely continuous part of the
    diffraction measure for the classic random matrix ensembles with
    parameters $\beta=1$ (red), $\beta=2$ (green) and $\beta=4$
    (blue). }
\end{figure}

The point processes (with point density $1$) derived from the classic
random matrix ensembles with parameter $\beta$, compare \cite{F} for
background, can also be analysed for the scaling near $0$. Based on
the calculations in \cite{BK11}, see also \cite[Thm.~11.3]{TAO}, the
scaling behaviour emerges from integrating the Radon--Nikodym
densities, which are illustrated in Figure~\ref{fig:dyson}. The result
reads
\[
  Z_{\beta}(k) \, = \, \begin{cases}
    k^2 -\tfrac{2}{3}k^3 + \cO(k^4), &
    \text{for $\beta=1$},\\
    \tfrac{1}{2} k^2, & \text{for $\beta=2$},\\
    \tfrac{1}{4}k^2 +\tfrac{1}{12} k^3 + \cO(k^4), &
    \text{for $\beta=4$}.
    \end{cases}
  \]
  The general behaviour is
  $Z_{\beta}(k)=\frac{1}{\beta} k^2+\cO(k^3)$, which significantly
  differs from the other cases discussed in this section.  As
  $\beta\,\raisebox{2pt}{$\scriptscriptstyle \searrow$}\, 0$, the
  leading term diverges, which indicates that one gets a different
  power law in this limit. Indeed,
  $\beta\,\raisebox{2pt}{$\scriptscriptstyle \searrow$}\, 0$ gives the
  homogeneous Poisson process with unit density, with
  $Z^{}_{\text{P}}(k)=k$ as in Eq.~\eqref{eq:Poisson}. In contrast,
  for $\beta\to\infty$, the leading term vanishes. This limit
  corresponds to the (deterministic) point process that realises the
  lattice $\ZZ$, where $Z(k)=0$ for all sufficiently small $k>0$.

\subsection{Markov lattice gas}

Let us consider a simple one-dimensional Markov lattice gas on $\ZZ$,
as defined by
\[
\raisebox{-18pt}{\includegraphics[width=0.3\textwidth]{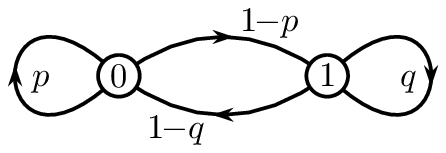}}\qquad
\text{with Markov matrix} \quad 
M \, = \, \begin{pmatrix} p & 1-p \\ 1-q & q \end{pmatrix},
\]
where $p$ and $q$ are probabilities subject to the constraint
$0<p+q<2$.  The two states of the Markov chain correspond to empty
(weight $0$) and occupied (weight $1$) sites. Interpreted as a lattice
gas, we almost surely get a particle system of density
$\varrho=\frac{1-p}{2-p-q}$.

Employing the results from \cite[Ex.~11.11]{TAO}, we know that the
absolutely continuous part of the diffraction measure,
$\widehat{\gamma}^{}_{\text{ac}}$, is almost surely represented by the
Radon--Nikodym density
\[
   g(k) \, = \, \frac{(1-p)(1-q)(1+r)}
                {(1-r)(1-2\pts r\cos(2\pi k) + r^2)}\, ,
\]
where $r=p+q-1$, which satisfies $|r|<1$ by our assumption. A
straightforward calculation gives
\begin{equation}\label{eq:MarkovZ}
  Z(k) \, = \, g(0) \Bigl( k - \myfrac{4\pts\pi^2}{3}\pts
  \myfrac{r}{(1-r)^2} \, k^3 + \cO (k^5)\Bigr)
\end{equation}
with $g(0)=\frac{(1-p)(1-q)(1+r)}{(1-r)^3}\geqslant 0$. When $r=0$,
Eq.~\eqref{eq:MarkovZ} simplifies to
\[
  Z(k)\, =\, (1-p)(1-q)\pts k\pts .
\]
Note that $r<0$ and $r>0$ correspond to effectively repulsive and
attractive systems, respectively; see Figure~\ref{fig:Markov} for an
illustration of $g$ for these two situations.

\begin{figure}
\centerline{\includegraphics[width=0.84\textwidth]{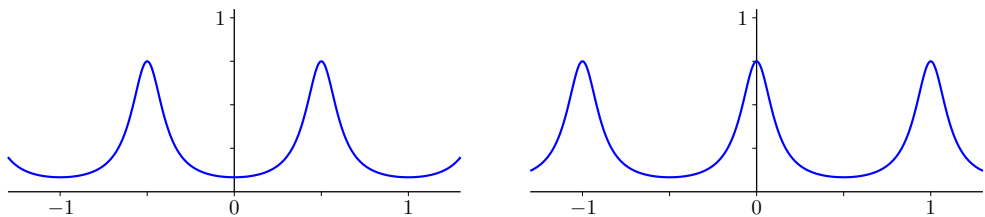}}
\caption{\label{fig:Markov}Absolutely continuous part of the Markov
  chain diffraction. The left panel shows the case $p=q=\frac{1}{4}$,
  which is effectively repulsive, with parameter
  \mbox{$r=-\frac{1}{2}$}. The right panel shows the corresponding
  effectively attractive case for $p=q=\frac{3}{4}$, with
  $r=\frac{1}{2}$.}
\end{figure}

\subsection{Binary random tilings on the line}

To keep things simple, let us consider a random tiling of $\RR$ that
is built from just two intervals (of lengths $u$ and $v$, say) with
corresponding probabilities $p$ and $q=1-p$. We place a scatterer of
unit strength at the left endpoint of each interval. Then, almost
surely, the diffraction comprises a pure point and an absolutely
continuous component, see \cite[Thm.~11.6]{TAO}, where only the latter
contributes to the scaling of $Z(k)$ near $0$. By a straightforward
integration, as $k\,\raisebox{2pt}{$\scriptscriptstyle
  \searrow$}\, 0$, one obtains
\[
  Z(k) \, = \, \frac{p\pts q(u-v)^2}{(pu+qv)^3}
  \left(k+\frac{\pi^2}{9}\pts\frac{u^2 v^2 - 2 u\pts v (pu^2 + qv^2 )}
    {p\pts q(u-v)^2 - (pu^2+qv^2)}\, k^3 + \cO(k^5)\right),
\]
which is non-trivial only for $u\ne v$. For $u=v$, the resulting point
set is always periodic, and $Z(k)=0$ for all sufficiently small $k>0$.

Note that, as in all other examples of this section, $Z(k)$ is a
continuous function, and one can often determine both the scaling
exponent and the corresponding coefficient explicitly. While the
continuity persists for singular continuous $\widehat{\gamma}$,
determining the constant will generally be more complicated.

\section{Singular continuous measures}\label{sec:sc}

Let us finally consider the case of structures with singular
continuous diffraction. While this seems a rare phenomenon in
practice, such systems are actually generic in some sense (in
particular among substitution systems), and thus deserve more
attention.

\subsection{The \mbox{Thue{\pts}}--Morse measure}\label{sec:TM}

The singular continuous \mbox{Thue{\pts}}--Morse (TM) measure
$\mu^{}_{\mathrm{TM}}$ is defined as an infinite Riesz product
\cite{Q,TAO},
\[
  \mu^{}_{\mathrm{TM}} \, = \prod_{\ell=0}^{\infty}
    \bigl( 1 - \cos(2^{\ell+1} \pi x) \bigr) ,
\]
which is to be understood as a limit in the vague topology; we refer
to \cite[Ch.~10.1]{TAO} and references therein for background and
notation.  Note that $\mu^{}_{\mathrm{TM}}$ is a probability measure
on $\TT =\RR/\ZZ$, which is the viewpoint from dynamical systems. It
is connected to the diffraction measure
$\widehat{\gamma}^{}_{\mathrm{TM}}$ on $\RR$ via
\[
     \widehat{\gamma}^{}_{\mathrm{TM}} \, = \,
     \mu^{}_{\mathrm{TM}} * \delta^{}_{\ZZ} \pts ,
\]
under the obvious identification of $\TT$ with $[0,1)$ and addition
modulo $1$. For the investigation of $Z (k)$ for small $k$, we can
thus simply work with $\mu^{}_{\mathrm{TM}}$.

More concretely, for $n\in\NN_0$, we consider the functions
\[
     f^{}_{n} (x) \, = \prod_{\ell=0}^{n-1}
     \bigl( 1 - \cos(2^{\ell+1} \pi x) \bigr) ,
\]
which are probability densities on the unit interval, where
$f^{}_{0} (x) =1$ as usual. They satisfy 
\begin{equation}\label{eq:f-split}
    f^{}_{n+m} (x) \, = \, f^{}_{n} (x) \, f^{}_{m} (2^n x) 
\end{equation}
for all $n,m \in \NN_0$. Since $f^{}_{n} (x) = f^{}_{n} (1-x)$,
one also has
\begin{equation}\label{eq:expect}
     \int_{0}^{1} \! x \, f^{}_{n} (x) \dd x \, = \, \myfrac{1}{2}
\end{equation}
for all $n\in\NN_0$.
The corresponding distribution functions 
\[
    F^{}_{n} (k) \, = \int_{0}^{k} \! \dd F^{}_{n} (x)
    \, =  \int_{0}^{k}  \! f^{}_{n} (x) \dd x 
\]
converge uniformly to the distribution function
$F (x) \defeq \mu^{}_{\mathrm{TM}} \bigl( [0,k] \bigr)$, which is a
continuous function on $[0,1]$ with $F (0)=0$ and $F(1)=1$.  In fact,
$F$ is strictly increasing and satisfies the symmetry relation
$F(1-k) = 1- F(k)$. Moreover, $F$ possesses the uniformly converging
Fourier series representation \cite[Eq.~10.9]{TAO}
\[
    F (k) \, = \, k \, + \sum_{m=1}^{\infty}
    \frac{\eta (m)}{m \pi} \pts \sin (2 \pi m k) \pts ,
\]
where the $\eta(m)$ are the Fourier coefficients of
$\mu^{}_{\mathrm{TM}}$. Note that the $\eta(m)$ are also the pair
correlation coefficients of the \mbox{Thue{\pts}}--Morse sequence when
realised with weights $\pm 1$.  They are recursively specified by
$\eta(0)=1$ together with
\[
  \eta(2m) \, =\,  \eta(m)\quad\text{and}\quad
  \eta(2m+1) \, = \, -\myfrac{1}{2}
  \bigl( \eta (m) + \eta(m+1)\bigr)
\]
for $m\in\NN_0$. In particular,
this entails $\eta(1) = - \frac{1}{3}$, while all other values then
follow recursively.

To study $Z(k) = F(k)$ for small $k$, compare \cite{BGKS}, it suffices
to analyse the behaviour of $F$ along sequences
$(2^{-n})^{}_{n\in\NN}$ because $F$ is strictly increasing and
continuous. Let $N\geqslant n$ and observe
\begin{equation}\label{eq:upper}
\begin{split}
  F^{}_{N} (2^{-n}) \, & = \int_{0}^{2^{-n}} \!\!\!
    f^{}_{N} (x) \dd x
    \, \stackrel{\eqref{eq:f-split}}{=} 
    \int_{0}^{2^{-n}} \!\!\! f^{}_{n} (x) \,
    f^{}_{N-n} (2^n x) \dd x \\[2mm]
    & \leqslant \sup \big\{ f^{}_{n} (x) :
    x\in [0,2^{-n}] \big\}  \;  2^{-n} \!
    \int_{0}^{1} \! f^{}_{N-n} (y) \dd y \,
    = \, 2^{-n}  f^{}_{n} (2^{-n}) \pts ,
\end{split}
\end{equation}
where the last step uses that $f^{}_{n}$ is increasing on
$[0,2^{-n}]$.  In the other direction, one gets
\begin{equation}\label{eq:lower}
\begin{split}
    F^{}_{N} (2^{-n}) \, & = \, 2^{-n} \! \int_{0}^{1}
     f^{}_{n} ( 2^{-n} y) \, f^{}_{N-n} (y) \dd y \\[2mm]
     & \geqslant \, 2^{-n} f^{}_{n} \biggl( 2^{-n} \! 
     \int_{0}^{1} \! y \, f^{}_{N-n} (y) \dd y \biggr) 
     \, \stackrel{\eqref{eq:expect}}{=} \,
     2^{-n} f^{}_{n} (2^{-n-1}) \pts ,
\end{split}
\end{equation}
where the estimate follows from Jensen's inequality, as
$f^{}_{n} (2^{-n} x)$ is convex on $[0,1]$.

\begin{remark}\label{rem:improve}
  One can improve the lower bound by splitting the integral as
\[
    F^{}_{N} (2^{-n}) \,  = \, 2^{-n} \! \int_{0}^{\frac{1}{2}}
     f^{}_{n-1} ( 2^{-n+1} y) \, 2 \pts f^{}_{N-n+1} (y) \dd y
     \, \geqslant \, 2^{-n} f^{}_{n-1} (2^{-n+1} \beta^{}_{N-n+1} )
\]  
   with $\beta^{}_{N-n+1} = 2 \int_{0}^{1/2} y \pts 
   f^{}_{N-n+1} (y) \dd y$. This trick uses the fact that
   $2 f^{}_{n} (x)$ is a probability density on 
   $\bigl[ 0, \frac{1}{2}\bigr]$, as a consequence of
   the reflection symmetry.
   Taking the limit as $N\to\infty$, one
   obtains the estimate $F (2^{-n}) \geqslant 2^{-n} f^{}_{n-1} 
   (2^{-n+1} \beta)$ with
 \[
    \beta \, = \, \myfrac{1}{4} - \myfrac{2}{\pi^2} 
    \sum_{m \geqslant 0} \frac{\eta (2 m +1)}{(2 m + 1)^2}
    \, \approx \, 0.309 94 \pts ,
 \]  
    as can be calculated with the Fourier series representation
    of $F$.
\exend
\end{remark}

So far, by taking $N\to\infty$, we have the bounds
\begin{equation}\label{eq:TM-bounds}
    2^{-n} f^{}_{n} (2^{-n-1}) \, \leqslant \,
    F (2^{-n}) \, \leqslant \, 2^{-n} f^{}_{n} (2^{-n}) \pts ,
\end{equation}
where the lower one can be improved as explained in
Remark~\ref{rem:improve}. For small $x$, a series expansion
analysis of $f^{}_{n}$ gives
\[
    f^{}_{n} (x) \, = \, 2^{n^2} (\pi x)^{2n} \Bigl( 1 -
    \myfrac{4^n - 1}{9} (\pi x)^2 +
    \myfrac{11\cdot 16^n - 25 \cdot 4^n + 14}{2015}
    (\pi x)^4 + \cO \bigl(x^6 \bigr) \Bigr) .
\]
With a bit more work, for $x=2^{-n}$, one can now establish the
asymptotic behaviour
\begin{equation}\label{eq:f-upper}
       2^{-n} f^{}_{n} (2^{-n}) \, \sim \, c \, 2^{-n^2} 
       \Bigl(\myfrac{\pi^2}{2} \Bigr)^{n} 
       \qquad \text{as $n\to\infty$},
\end{equation}
with $c \approx 0.306 {\pts} 663$. The constant was calculated
numerically from the quickly converging sequence of quotients.  Note
that the right-hand side of Eq.~\eqref{eq:f-upper} decays faster than
any power of $2^{-n}$, which means that we will not get a power law
for the scaling of $F(k)$ as $k\,\raisebox{2pt}{$\scriptscriptstyle
  \searrow$}\, 0$.  Similarly, one obtains the asymptotic behaviour
\begin{equation}\label{eq:f-lower}
       2^{-n} f^{}_{n} (2^{-n-1}) \, \sim \,
        \myfrac{\pi^2 c}{4} \, 2^{-n^2} \Bigl( 
        \myfrac{\pi^2}{8} \Bigr)^{n}
       \qquad \text{as $n\to\infty$},
\end{equation}
with the same constant $c$, hence 
$\frac{\pi^2 c}{4} \approx 0.756 {\pts} 660$.

It remains to express the result in terms of $k = 2^{-n}$, via
$\log (k) = -n \log (2)$. From Eqs.~\eqref{eq:f-upper} and
\eqref{eq:f-lower}, in conjunction with the continuity and
monotonicity of $F$, one obtains the following result.

\begin{theorem}\label{thm:tm}
  The distribution function\/ $F$ of the \mbox{Thue{\pts}}--Morse
  measure satisfies the asymptotic bounds
\[
     c^{}_{1} k^{2+\alpha} \exp \Bigl(
     - \myfrac{\log (k)^2}{\log (2)} \Bigr)
     \, \leqslant \, F (k) \, \leqslant \,
     c^{}_{2} \pts k^{\alpha} \pts \exp \Bigl(
     - \myfrac{\log (k)^2}{\log (2)} \Bigr)
\]
for small\/ $k>0$, with exponent
\[
    \alpha \, = \, - \frac{\log \bigl( 
    \frac{\pi^2}{2} \bigr)}{\log(2)} \, \approx \,
    -2.302 {\pts} 992 
\]
and suitable constants $c^{}_{1}$, $c^{}_{2}$.\qed
\end{theorem}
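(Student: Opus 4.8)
The plan is to convert the bounds in Eqs.~\eqref{eq:f-upper} and \eqref{eq:f-lower} from the discrete scale $k=2^{-n}$ to a statement for all small $k>0$, using the monotonicity and continuity of $F$ to interpolate. First I would rewrite the right-hand side of \eqref{eq:f-upper}: with $k=2^{-n}$ we have $n = -\log(k)/\log(2)$, so $2^{-n^2} = \exp\bigl(-n^2 \log(2)\bigr) = \exp\bigl(-\log(k)^2/\log(2)\bigr)$, while $\bigl(\pi^2/2\bigr)^n = \exp\bigl(n \log(\pi^2/2)\bigr) = k^{-\log(\pi^2/2)/\log(2)} = k^{\alpha}$ with $\alpha$ as stated. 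Hence $2^{-n}f^{}_n(2^{-n}) \sim c\, k^{\alpha}\exp\bigl(-\log(k)^2/\log(2)\bigr)$ along $k=2^{-n}$. The same computation applied to \eqref{eq:f-lower} gives $2^{-n}f^{}_n(2^{-n-1}) \sim \tfrac{\pi^2 c}{4}\, 2^{-n}\cdot k^{\alpha}\exp\bigl(-\log(k)^2/\log(2)\bigr)$; the extra factor $2^{-n} = k$ turns $k^{\alpha}$ into $k^{1+\alpha}$, and a further factor of $k$ (absorbed into the constant after interpolation, see below) produces the $k^{2+\alpha}$ in the theorem.

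Next I would handle a general small $k>0$ by sandwiching: choose $n=n(k)$ with $2^{-n-1}< k \leqslant 2^{-n}$. Monotonicity of $F$ gives $F(2^{-n-1}) \leqslant F(k) \leqslant F(2^{-n})$. Plugging in \eqref{eq:TM-bounds} (with the improved lower bound of Remark~\ref{rem:improve} if a cleaner constant is wanted) together with the asymptotics \eqref{eq:f-upper} and \eqref{eq:f-lower}, one gets, for $n$ large,
\[
  F(k) \, \leqslant \, F(2^{-n}) \, \leqslant \, 2 c \, 2^{-n^2}\Bigl(\myfrac{\pi^2}{2}\Bigr)^n
  \qquad\text{and}\qquad
  F(k) \, \geqslant \, F(2^{-n-1}) \, \geqslant \, \tfrac{1}{2}\,\myfrac{\pi^2 c}{4}\, 2^{-(n+1)^2}\Bigl(\myfrac{\pi^2}{8}\Bigr)^{n+1},
\]
say, for suitable absorbed constants. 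It remains to replace $2^{-n}$ and $2^{-(n+1)}$ by $k$ in these expressions at the cost of bounded multiplicative factors. Since $2^{-n-1}<k\leqslant 2^{-n}$, one has $\tfrac12 \leqslant k/2^{-n} \leqslant 1$, so $2^{-n^2} = \exp(-n^2\log 2)$ and $\exp(-\log(k)^2/\log 2)$ differ only by a factor bounded above and below by positive constants (the exponents differ by $O(|\log k|)$, which is exactly what the extra powers of $k$ in the theorem's $k^{\alpha}$ versus $k^{2+\alpha}$ are designed to soak up); likewise $(\pi^2/2)^n$ and $k^{\alpha}$ differ by a bounded factor. Tracking the direction of each inequality and of each $k/2^{-n}\in[\tfrac12,1]$ ratio carefully then yields the two-sided bound with a gap of exactly $k^2$ between the exponents, which is the asserted statement with appropriate $c^{}_1, c^{}_2$.

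The main obstacle I expect is bookkeeping the interpolation step cleanly: the difference between the discrete scale $2^{-n}$ and a generic $k$ in the dyadic interval $(2^{-n-1},2^{-n}]$ produces a factor $\exp\bigl(O(|\log k|)\bigr)$ in the super-exponential term $\exp(-\log(k)^2/\log 2)$, and one must verify that this is genuinely absorbed by the polynomial prefactors — i.e. that choosing the exponents $\alpha$ and $2+\alpha$ (a gap of $2$) is exactly enough slack. Concretely, writing $k = \theta\, 2^{-n}$ with $\theta\in(\tfrac12,1]$, one checks $\exp(-\log(k)^2/\log 2) = 2^{-n^2}\cdot \theta^{-2n}\cdot \exp(-\log(\theta)^2/\log 2)$, and since $\theta^{-2n}\in[1,4^n]$ one has $\theta^{-2n}$ comparable to a bounded power of $k$ times a constant; matching this against the $k^{\alpha}$ in the upper bound and $k^{1+\alpha}$ in the lower bound (before the final absorption) shows the gap of $2$ suffices with room to spare. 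The remaining ingredients — the validity of \eqref{eq:f-upper}, \eqref{eq:f-lower} and \eqref{eq:TM-bounds} — are already established in the text, so no further analytic input is needed; the argument is purely a rescaling of coordinates combined with monotone interpolation.
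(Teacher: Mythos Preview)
Your approach is essentially identical to the paper's: the text leading up to the theorem establishes Eqs.~\eqref{eq:TM-bounds}, \eqref{eq:f-upper} and \eqref{eq:f-lower}, and then states that ``in conjunction with the continuity and monotonicity of $F$'' the theorem follows --- exactly the dyadic-scale bounds plus monotone interpolation you describe.

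One small arithmetic slip to fix: in your conversion of the lower bound \eqref{eq:f-lower}, note that $(\pi^{2}/8)^{n} = (\pi^{2}/2)^{n}\cdot 4^{-n}$, so with $k=2^{-n}$ one obtains $(\pi^{2}/8)^{n} = k^{\alpha}\cdot k^{2} = k^{2+\alpha}$ \emph{directly} at the dyadic points, not $k^{1+\alpha}$. There is no need to extract an extra factor of $k$ from the interpolation step to reach the stated exponent $2+\alpha$; the gap of $2$ between the upper and lower exponents is already present at the scale $k=2^{-n}$, and the role of monotonicity is exactly as the paper describes --- to pass from the dyadic sequence to general small $k$.
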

    
A similar estimate, with the same leading term, was previously given
in \cite{GL}. Some numerical experiments indicate that the upper bound
is better then the lower one (which can be improved by
Remark~\ref{rem:improve}), and that the true behaviour is closer to a
prefactor of the form $k^{\alpha+s}$ with $s$ near $\frac{1}{2}$,
which deserves further analysis.

\subsection{Generalised \mbox{Thue{\pts}}--Morse (gTM) sequences}

Let us finally consider the family of binary, primitive,
constant-length substitutions
$\varrho^{}_{p,q} \! : a \mapsto a^p b^q, b \mapsto b^{\pts p} a^q$
with $p,q\in\NN$; see \cite{BGG} for a general exposition and
\cite[Ex.~10.1]{TAO} for a short summary.  The case $p=q=1$ is the
classic \mbox{Thue{\pts}}--Morse substitution considered above. In
what follows, we suppress the explicit dependence on $p$ and $q$ for
ease of notation.

\begin{figure}
\includegraphics[width=0.9\textwidth]{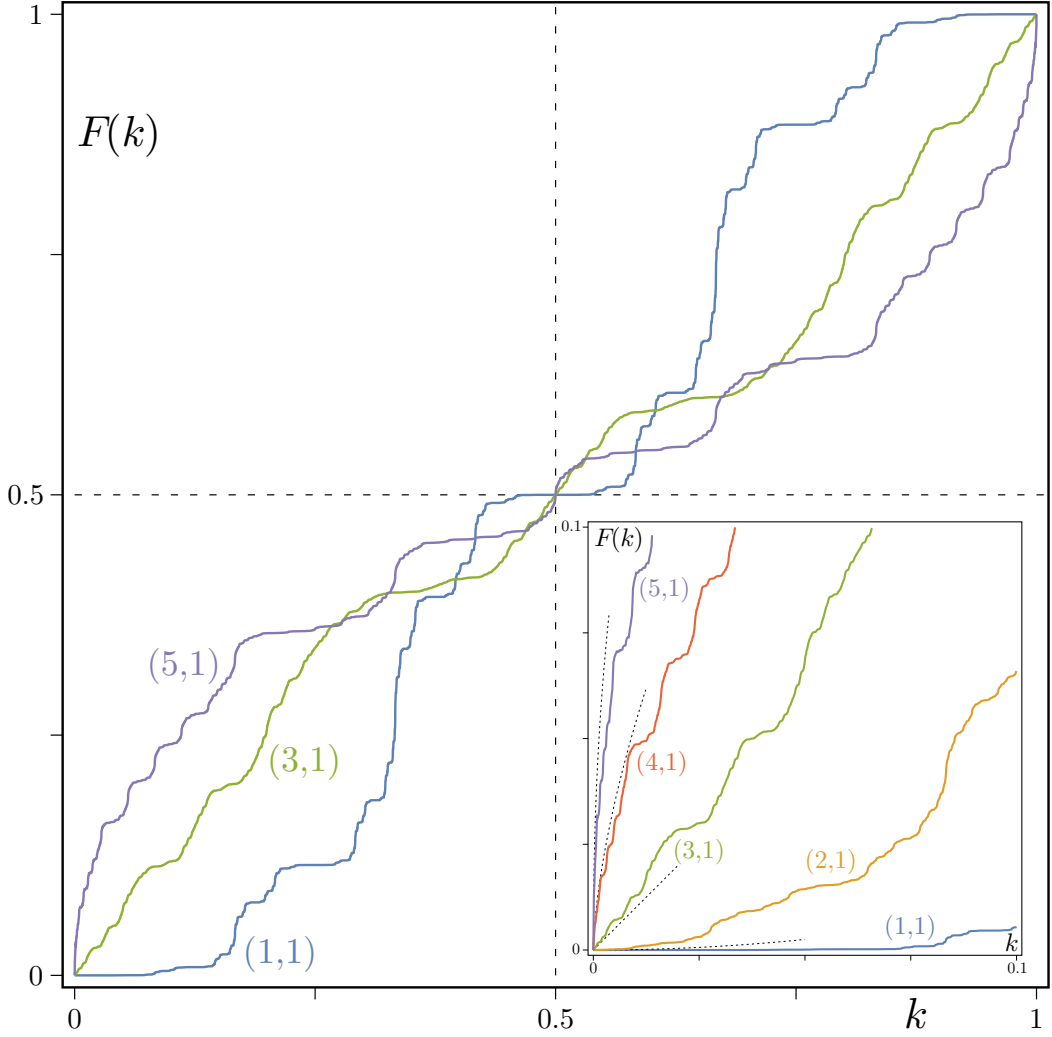}
\caption{Distribution functions $F(k)$ of the gTM measures for
  $(p,1)$ with $p\in\{1,3,5\}$ and the behaviour at small $k$ for
  $p\in\{1,2,3,4,5\}$ (inset).\label{fig:TM} Dashed lines indicate the
  scaling behaviour from Eq.~\eqref{eq:gtmscaling} for
  $p\in\{2,3,4,5\}$.}
\end{figure}

Here, the spectral measure is given by the Riesz product
$\mu  = \prod_{m\geqslant 0} \vartheta\bigl((p+q)^m x\bigr)$ with the 
non-negative trigonometric polynomial
\begin{equation}\label{eq:thetadef}
  \vartheta(x) \, = \, 1 + \myfrac{2}{p+q} \sum_{r=1}^{p+q-1}
  \alpha (p,q,r) \, \cos(2\pi rx) \pts ,
\end{equation}
where $\alpha (p,q,r)=p+q-r-2\pts\min(p,q,r,p+q-r)$. For $x\to 0$,
$\vartheta$ behaves as 
\begin{equation}\label{eq:thetaexp}
  \vartheta(x) \, = \, \frac{(p-q)^2}{p+q} + 
  \frac{12\pts p^2 q^2 + (p-q)^2 - (p-q)^4}{3(p+q)} 
   \pts(\pi x)^2 + \cO (x^4)\pts .
\end{equation}
Whenever $p=q$, we have $\vartheta(0)=0$, and we are in a situation
similar to that of the classic \mbox{Thue{\pts}}--Morse
sequence. Otherwise, $\vartheta(0)>0$, where this value can be less
than $1$, precisely $1$ or larger than $1$. Nevertheless, all these
cases can be treated by the same approach. Defining
$f^{}_{n}(x)=\prod_{m=0}^{n-1}\vartheta\bigl((p+q)^mx\bigr)$, one has
\[
   f^{}_{n+m}(x) \, = \, f^{}_{n}(x)\, f^{}_{m}\bigl((p+q)^n x\bigr)
\]
for arbitrary $m,n\in\NN_{0}$. In particular, this gives
\[
   f^{}_{n+m}\Bigl(\myfrac{x}{(p+q)^n}\Bigr) \, = \, 
   \vartheta\Bigl(\myfrac{x}{(p+q)^n}\Bigr) \, 
   f^{}_{n+m-1}\Bigl(\myfrac{x}{(p+q)^{n-1}}\Bigr).
\]
 
Since $\lim_{n\to\infty}\vartheta\bigl((p+q)^{-n}x\bigr)=\vartheta(0)$,
we see that, for any fixed $m$, the limit
\[
   \lim_{n\to\infty} \vartheta(0)^{-n} f^{}_{n+m}\bigl((p+q)^{-n}x\bigr) 
\]
exists. By standard arguments, this implies the asymptotic behaviour
\[
   F\bigl((p+q)^{-n}\bigr) \, \sim \,  
   \biggl(\frac{p+q}{\vartheta(0)}\biggr)^{\! -n}\qquad
   \text{as $n\to\infty$.}
\]
Since $F$ is continuous and increasing, using
$\vartheta(0)=(p-q)^2/(p+q)$, one finds the scaling behaviour
\begin{equation}\label{eq:gtmscaling}
   Z(k) \, = \, F(k) \, \sim \, k^{2-2\frac{\log|p-q|}{\log(p+q)}}\qquad
   \text{as $k\,\raisebox{2pt}{$\scriptscriptstyle
       \searrow$}\, 0 \pts $,}
\end{equation}
which is in agreement with the scaling argument from \cite{GL} and
Conjecture 3 in \cite{Josh2}. For $p\ne q$, the exponent can be $2$,
$1$ or take positive values below $1$ that can become arbitrarily
small. The corresponding behaviour for small $k$ matches well with the
examples from \cite{BGG} and \cite[Fig.~10.2]{TAO}, as illustrated in
Figure~\ref{fig:TM} for some characteristic choices of the parameters.

\section{Concluding remarks}

The situation for one-dimensional systems is in reasonably good shape,
though a better understanding is still desirable. This is particularly
so for multi-type systems, where the existing approach gives bounds
but not necessarily the exact asymptotic behaviour.

Less clear is the appropriate approach to analyse tilings and point
processes in higher dimension, where one could single out special
directions, but also consider the total diffraction into regions near
the origin. This should be possible within the realm of both inflation
tilings and projection point sets.

\section*{Acknowledgements}

It is our pleasure to thank Michael Coons, Franz G\"{a}hler, Philipp
Gohlke, Neil Ma\~{n}ibo and Joshua Socolar for valuable discussions
and suggestions. We thank the MATRIX Institute in Creswick and the
Department of Mathematics and Physics at the University of Tasmania in
Hobart for hospitality, where this manuscript was completed. This work
was supported by the German Research Foundation (DFG) through CRC 1283
and by EPSRC through grant EP/S010335/1.

This paper is dedicated to the memory of Vladimir Rittenberg, who was
our PhD supervisor. We owe a lot to Vladimir, who always freely shared
his views and insights with his students, but also strongly supported
them in pursuing their own ideas.

\end{document}